\newtheorem{evaluation}{Evaluation}
  \DeclareMathOperator{\cross}{cr}
\DeclareMathOperator{\white}{wh}
\DeclareMathOperator{\black}{bl}
\DeclareMathOperator{\total}{tot}
\DeclareMathOperator{\ws}{ws}
\DeclareMathOperator{\cs}{cs}
\DeclareMathOperator{\bs}{bs}
\newcommand{\la}{\lambda}
\title[Evaluations of topological Tutte polynomials]{Evaluations of topological Tutte polynomials}
  \title[Evaluations of topological Tutte polynomials]{Evaluations of topological Tutte polynomials}
\author[J.~Ellis-Monaghan and I. Moffatt]{J.\ns E\ls L\ls L\ls I\ls S\ls -\ls M\ls O\ls N\ls A\ls G\ls H\ls A\ls N{$^1$}%
\thanks{Supported by the National Science Foundation (NSF) under grant number DMS-1001408 and by the Vermont Space Grant Consortium through the National Aeronautics and Space Administration (NASA)}\ns
and\ns I.\ns  M\ls O\ls F\ls F\ls A\ls T\ls T{$^2$} }
\affil{{$^1$} Department of Mathematics, Saint Michael's College, 1 Winooski Park, Colchester, VT 05439, USA.  \\
 (e-mail: \texttt{jellis-monaghan@smcvt.edu})\\[6pt]
{$^2$} Department of Mathematics,
Royal Holloway,
University of London,
Egham,
Surrey,
TW20 0EX, UK. \\
 (e-mail: \texttt{iain.moffatt@rhul.ac.uk})}
\begin{document}

\setcounter{page}{0}
\thispagestyle{empty}
\tableofcontents

\label{firstpage}
\maketitle

\begin{abstract}
We find new properties of the topological transition polynomial of embedded graphs, $Q(G)$.  We use these properties to   explain the striking similarities between certain evaluations of Bollob\'as and Riordan's ribbon graph polynomial, $R(G)$, and the topological Penrose polynomial, $P(G)$.
The general framework provided by $Q(G)$ also leads to several other combinatorial interpretations these polynomials.  In particular, we express $P(G)$, $R(G)$, and the Tutte polynomial, $T(G)$, as sums of chromatic polynomials of graphs derived from $G$; show that these polynomials count  $k$-valuations of medial graphs; show that $R(G)$ counts edge 3-colourings; and reformulate the Four Colour Theorem in terms of $R(G)$. We conclude with a reduction formula for the transition polynomial of the tensor product of two embedded graphs, showing that it leads to additional relations among these polynomials and to  further combinatorial interpretations of $P(G)$ and $R(G)$.
\end{abstract}

\begin{AMScodes} {05C31}{05C10}\end{AMScodes}

\section{Introduction}
In this paper we give versatile new identities for the topological transition polynomial $Q(G)$.  We use these identities  to obtain new connections between, and interpretations of, the Penrose polynomial  $P(G)$, and  Bollob\'as and Riordan's ribbon graph polynomial $R(G)$, and  also to explain similarities between known combinatorial evaluations of $P(G)$ and $R(G)$. (Definitions of these polynomials can be found in  Sections~\ref{sstgp} and~\ref{ss.tra}.)

We are motivated by some strikingly similar combinatorial interpretations of $P(G)$ and $R(G)$  in terms of edge colourings of medial graphs. If $G$ is an embedded graph (i.e., a graph in a surface as in Section~\ref{s.not}), a   $k$-valuation  of  its medial graph,  $G_m$, is an edge $k$-colouring of $G_m$ such that each vertex is incident to an even number (possibly zero) of edges of each colour.  A $k$-valuation of a checkerboard-coloured embedded medial graph yields four possible arrangements of colours about a vertex, which we term white, black, crossing, or total,  as in Figure~\ref{f.kval}. We let $\total(\phi)$ and $\cross(\phi)$ denote the number of  total vertices and crossing vertices, respectively, in  a $k$-valuation $\phi$.

\begin{figure}[h]
\begin{tabular}{ccccccc}
\labellist \small\hair 2pt
\pinlabel {$i$}  at 22 64
\pinlabel {$j$}  at 22 10
\pinlabel {$j$}  at 50 10
\pinlabel {$i$}  at 50 64
\endlabellist
 \includegraphics[width=1.2cm]{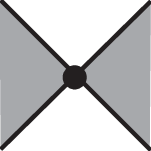}
  &  \quad\quad\quad &
  \labellist \small\hair 2pt
\pinlabel {$i$}  at 22 64
\pinlabel {$i$}  at 22 8
\pinlabel {$j$}  at 50 8
\pinlabel {$j$}  at 50 64
\endlabellist
 \includegraphics[width=1.2cm]{m1}
  & \quad\quad\quad&
  \labellist \small\hair 2pt
\pinlabel {$i$}  at 22 64
\pinlabel {$j$}  at 22 10
\pinlabel {$i$}  at 50 10
\pinlabel {$j$}  at 50 64
\endlabellist
 \includegraphics[width=1.2cm]{m1}
  & \quad\quad\quad&
  \labellist \small\hair 2pt
\pinlabel {$i$}  at 22 64
\pinlabel {$i$}  at 22 10
\pinlabel {$i$}  at 50 10
\pinlabel {$i$}  at 50 64
\endlabellist
 \includegraphics[width=1.2cm]{m1},
  \\
white && black &&crossing &&  total
\end{tabular}
\caption{Classifying $k$-valuations at a vertex, where $i\neq j$.}
\label{f.kval}
\end{figure}

The polynomials $R(G)$  and $P(G)$ admit the following remarkably similar  interpretations as sums over $k$-valuations of $G_m$:
\begin{align}\label{e.intro1}
 k^{c(G)}  R(G;k+1,k,1/k,1) &= \sum 2^{\total(\phi)},
\\
\label{e.intro2}
P(G;k) &= \sum  (-1)^{\cross(\phi)},
\\
\label{e.intro3}
P(G;-k) &= (-1)^{f(G)}  \sum  2^{\total(\phi)}.
\end{align}
The sum in \eqref{e.intro1} is over all $k$-valuations $\phi$ of $G_m$ that contain no crossing configurations, the sum in  \eqref{e.intro2} is over those that contain no black or crossing configurations, and the sum in \eqref{e.intro3} is over those that contain no black configurations.  In addition, in \eqref{e.intro3}, $G$ must be orientable and checkerboard colourable.
(Equation~\eqref{e.intro1} is due to Korn and Pak \cite{KP03}; while \eqref{e.intro2} and \eqref{e.intro3} are from   \cite{EMMb}, with \eqref{e.intro2} extending results of  Jaeger \cite{Ja90} and Aigner \cite{Ai97}.)

The striking fact that  $R(G)$ and $P(G)$ have such similar combinatorial evaluations demands explanation, and we find it here through the topological transition polynomial, $Q(G)$.  We unify and extend  Equations~\eqref{e.intro1} and \eqref{e.intro2} by finding a combinatorial interpretation of $Q(G)$ in Theorem~\ref{t.qk}.  Using the facts that  $Q(G)$ specialises to $P (G)$, and to $R(G)$ along certain curves, we show that \eqref{e.intro1} and \eqref{e.intro2}  arise as specialisations of  a combinatorial evaluation of $Q(G)$ (see Evaluation~\ref{c.qk}), thus explaining their similarity.

The connection between  \eqref{e.intro1} and \eqref{e.intro3} is especially interesting as it arises from  duality identities. By considering the twisted duality relation for the transition polynomial from \cite{EMMa}, in  Theorem~\ref{PtoZ} we connect $P(G^*)$ and $R(G^{\times})$, where $G^*$ is the geometric dual of $G$ and  $G^{\times}$ is its Petrie dual.
This new relation, together with our combinatorial evaluation of  $Q(G)$, allow us to extend Equation~\eqref{e.intro3} to a larger class of graphs (see Evaluation~\ref{total2}). This amalgamates Equations~\eqref{e.intro1}--\eqref{e.intro3} into a single result about the transition polynomial,  explaining not only the similarities between $P(G)$ and $R(G)$, but also the restriction on $G$ in Equation~\eqref{e.intro3}.

The new properties of $Q(G)$ also lead to a host of other interpretations of, and relations between,   graph polynomials.  The point of this is two-fold: it deepens the understanding of the relations between various graph polynomials; and, perhaps more importantly, it reveals  combinatorial and topological information that is encoded by evaluations of these polynomials.  For example, we  show in Theorem~\ref{c41.n5} that the Penrose polynomial can be written as a sum of chromatic polynomials:
\[  P(G;\lambda) = \sum_{A\subseteq E(G)}  (-1)^{ |A|}  \chi ((   G^{\tau(A)}   )^*   ;\lambda)  .\]
This identity completes a result of Aigner, from \cite{Ai97}, in which it was shown that $\chi(G^*;k)\leq P(G;k)$, for all $k\in \mathbb{N}$, and extends the version in \cite{EMMb} from plane graphs to all embedded graphs. We give a similar identity for  the ribbon graph and Tutte polynomials (see Corollary~\ref{t.interp2a}).
  We also give a number of new combinatorial evaluations of, and identities for, the ribbon graph polynomial $R(G)$ including a  reformulation of the Four Colour Theorem (Section~\ref{ss.argp}).

Many of the results mentioned so far rely on an identity connecting $P(G^*)$ and $R(G^{\times})$, which  arose from the  twisted duality relation for the transition polynomial from \cite{EMMa}. In Section~\ref{s.tp} we present a family of connections that is obtained instead by considering tensor products of embedded graphs. If $G$ and $H$ are abstract or embedded graphs, then,  roughly speaking, the tensor product of $G\otimes H$ is obtained by replacing  each edge of $G$ with a copy of $H-e$. In \cite{Bry82}, Brylawski showed how the Tutte polynomial  of $G\otimes H$ can be written in terms of the Tutte polynomial of its tensor factors. Brylawski's result was extended   to $R(G)$ in \cite{HM}. In Theorem~\ref{t.tensor}, our third central result, we give a tensor product formula for the transition polynomial, expressing $Q(G\otimes H)$ in terms of $Q(G)$. This leads to a new family of  identities relating the $P(G)$ and $R(G)$, and to new interpretations of these polynomials, and to connections with some results of Jaeger from \cite{Ja88}.

\section{Notation for embedded graphs}\label{s.not}
In this section, we briefly review relevant constructions involving embedded graphs. More leisurely expositions can be found in \cite{Ch1}, in Sections~2.1--2.3 of \cite{EMMb}, and Section~2.1 of \cite{Mof11c}.  Our notation is standard.

\subsection{Embedded graphs}
Cellularly embedded graphs, ribbon graphs, and arrow presentations all represent the same object, which we call an \emph{embedded graph}. We will move freely between these three descriptions, which we presently describe.

A {\em cellularly embedded  graph} $G=(V(G),E(G)) \subset \Sigma$ is a graph embedded in a surface $\Sigma$ such that each connected component of $\Sigma \backslash G$
is homeomorphic to a disc. The connected components, when viewed as subsets of $\Sigma$, are  the {\em faces}  of $G$.  Two cellularly embedded graphs $G\subset \Sigma$ and  $G'\subset \Sigma'$  are
 {\em equivalent},   if there is an (orientation preserving when $\Sigma$ is orientable) homeomorphism $\varphi:\Sigma \rightarrow \Sigma'$ with the property that $\varphi|_{G}:G\rightarrow G'$ is a graph isomorphism. We consider cellularly embedded graphs up to this equivalence.

A {\em ribbon graph} $G =\left(  V(G),E(G)  \right)$ is a surface with boundary represented as the union of two  sets of  discs, a set $V (G)$ of {\em vertices}, and a set $E (G)$ of {\em edges}  such that:  the vertices and edges intersect in disjoint line segments,
 each such line segment lies on the boundary of precisely one
vertex and precisely one edge, and
 every edge contains exactly two such line segments.
Ribbon graphs are known to be equivalent to cellularly embedded graphs (ribbon graphs arise as neighbourhoods of cellularly embedded graphs, while capping off the the boundary components of a ribbon graph with discs results in a cellularly embedded graph).
We say that two  ribbon graphs are {\em equivalent} if they define equivalent cellularly embedded graphs. Ribbon graphs are considered up to this equivalence.

An {\em arrow presentation} consists of a set of circles, each marked with a collection of disjoint, labelled arrows, called  {\em marking arrows}  on them such that there are exactly two marking arrows with each label.  Arrow presentations describe  ribbon graphs as follows: if $G$ is a ribbon graph, for each edge $e$, arbitrarily orient its boundary, place an $e$-labelled arrow pointing in the direction of the orientation on the arcs where  $e$ intersects a vertex. Taking the boundaries of the vertices together with the labelled arrows gives an arrow presentation.   In the other direction,  given an arrow presentation, obtain a ribbon graph by  identifying each circle with the boundary of a vertex disk.    Then, for each label $e$, take a disc with an orientation of its boundary and  identify an arc on its boundary with an $e$ labelled arrow such that the direction of each arrow agrees with the orientation. Thus, in an arrow presentation, each circle corresponds to a vertex of an embedded graph, and each pair of $e$-labelled arrows corresponds to an edge.
 We say that two arrow presentations are {\em equivalent} if they describe the same ribbon graph (and therefore the same cellularly embedded graph). Arrow presentations are considered up to  equivalence.

 Let $G$ be an embedded graph. We set  $v(G):=|V(G)|$ and  $e(G):=|E(G)|$, and denote the number of components of $G$ by $c(G)$. The rank of $G$ is $r(G):=v(G)-c(G)$, and the nullity of $G$ is $n(G):=e(G)-r(G)$.
  If $G$ is viewed as a ribbon graph, then $f(G)$ is the number of boundary components of the surface defining the ribbon graph, and if $G$ is viewed as a cellularly embedded graph, $f(G)$ is the number of its faces.   The {\em Euler genus}, $\gamma(G) $, of  $G$ equals the genus of $G$ if $G$ is non-orientable, and is twice its genus if $G$ is orientable. Euler's formula gives that $v(G)-e(G)+f(G)=2c(G)-\gamma(G)$.
 An embedded graph $G$ is   {\em plane}  if it is connected and $\gamma(G)=0$.

We define {\em subgraphs} of embedded graphs in terms of ribbon graphs. A ribbon graph $H $ is a {\em ribbon subgraph}  of $G$ if $H$ can be obtained by deleting vertices and edges of $G$. If $H$ is a ribbon subgraph of $G$ with $V(H)=V(G)$, then $H$ is a  {\em spanning ribbon subgraph} of $G$.
We define embedded subgraphs for other realisations of embedded graphs by translating into the language of ribbon graphs.  In particular, this means that a cellularly embedded subgraph $H$ of a cellularly embedded graph $G$ is automatically equipped with a cellular embedding in a surface, although  $G$ and $H$ may be embedded in different surfaces.
(For example, let  $G$ be the orientable ribbon graph consisting of one vertex $v$ and two edges $e$ and $f$ that meet $v$ in the cyclic order $(e\,f\,e\,f)$, and let $H=G-e$. When viewed as a cellularly embedded graph, $G$ is embedded on the torus with one loop following a meridian and the other a longitude. However, as a cellularly embedded graph, $H$ is a loop embedded in the sphere.)
 For $A\subseteq E(G)$, we let $r(A)$, $c(A)$, $n(A)$,  $f(A)$,  $g(A)$, and $\gamma(A)$ each refer to the spanning subgraph of $(V(G),A)$ of $G$ (where $G$ is given by context).

\subsection{Deletion and contraction}\label{ss.contract}
Care has to be taken when deleting and contracting edges of embedded graphs to ensure that the operation results in an embedded graph.

 If $e\in E(G)$ then $G-e$ is the spanning subgraph on edge set $E(G)\backslash \{e\}$.  When viewed in the setting of ribbon graphs, then it is clear that $G-e$ is again a ribbon graph, i.e. is cellularly embedded, although not necessarily embedded in the same surface as $G$ as noted in the torus example above.

 We will define contraction through arrow presentations. If $G$ is an embedded graph viewed as an arrow presentation and $e\in E(G)$, then $G/e$ is obtained as follows. Suppose $\alpha$ and $\beta$ are the two $e$-labelled arrows. Connect the tip of $\alpha$ to the tail of $\beta$ with a line segment; and  connect the tip of $\beta$ to the tail of $\alpha$ with another line segment. Delete $\alpha$, $\beta$ and the arcs of the circles (or circle) on which they lie. This results in an arrow presentation for $G/e$. (See Table~\ref{tablecontractrg}.)
This definition of contraction agrees with the usual idea of contracting an edge by identifying it and its incident vertices in the cases where doing so results in a   cellularly embedded graph or  a ribbon graphs.
 Table~\ref{tablecontractrg} shows the local effect of contracting an edge of a ribbon graph. In it, the ribbon graphs are identical outside of the region shown.

\begin{table}
\centering
\begin{tabular}{|c||c|c|c|c|}\hline
& any edge &  non-loop & non-orientable loop&orientable loop\\ \hline
\raisebox{6mm}{$G$} &
\includegraphics[scale=.45]{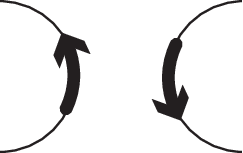}
&\includegraphics[scale=.25]{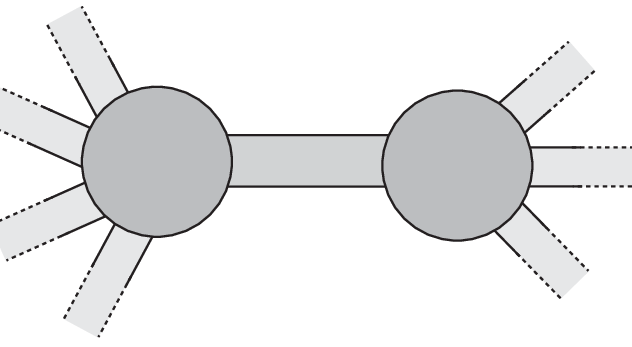} &\includegraphics[scale=.25]{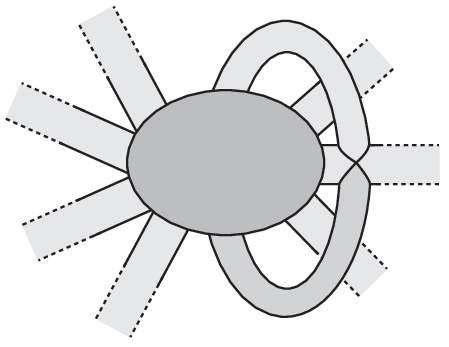} &\includegraphics[scale=.25]{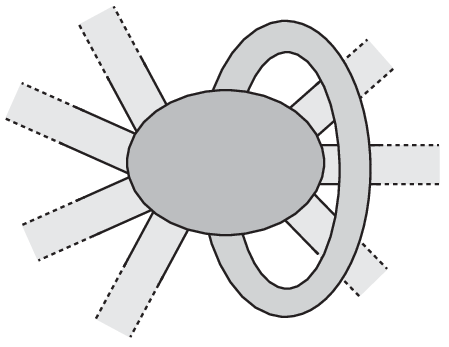}
\\ \hline
\raisebox{6mm}{$G/e$} &
\includegraphics[scale=.45]{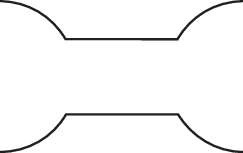}
&\includegraphics[scale=.25]{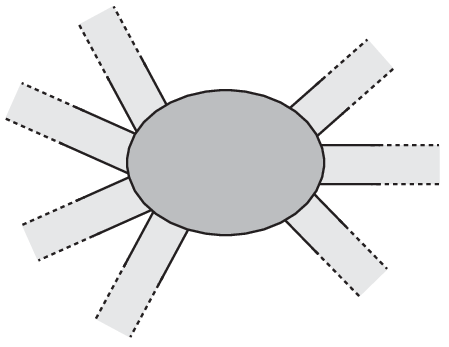} &\includegraphics[scale=.25]{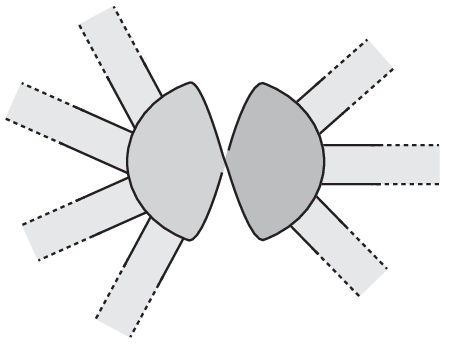}&\includegraphics[scale=.25]{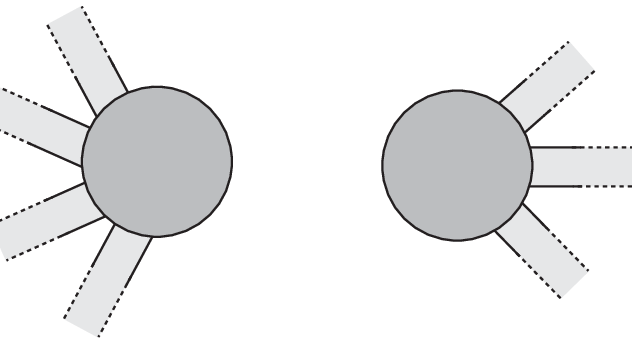} \\ \hline
\end{tabular}
\caption{Contracting  an edge of an arrow presentation and a ribbon graph}
\label{tablecontractrg}
\end{table}

\subsection{Petrie duality and Geometric duality}\label{ss.dual}
Let $G$ be an embedded graph viewed as a cellularly embedded graph. Recall that the {\em geometric dual}, $G^*$, of  $G$ is formed  by
placing one vertex in each face of $G$ and embedding an edge of $G^*$
between two vertices whenever the faces of $G$ they lie in are
adjacent. Note that $G$ and $G^*$ are embedded in the same surface.

 The {\em Petrie dual}, or {\em Petrial},  $G^{\times}$, of  $G$ is the embedded graph  with  the same edges and vertices as $G$, but with Petrie polygons, which are the result of closed left-right walks in $G$, as the faces (see Wilson \cite{Wil79}).  Petrie duals have a very simple definition  in terms of arrow presentations in that reversing the direction of exactly one arrow of each label in an arrow presentation of $G$ results in the arrow presentation of $G^{\times}$.
Observe that in the language of arrow presentations, Petrie duality is a local operation rather than a global operation, and thus we can form the Petrie dual with respect to only a subset of edges. Let $G$ be an embedded graph represented by an arrow presentation and $A\subseteq E(G)$. Then the {\em partial Petrial} $G^{\tau(A)}$ is the embedded graph obtained by, for each $e\in A$, reversing the direction of exactly one of the  $e$-labelled arrows. Note that $G^{\times}=G^{\tau(E(G))}$ and that $G^{\emptyset}=G$. If $A=\{e\}$ then we will write $G^{\tau(e)}$ for $G^{\tau(\{e\})}$.
  In terms of ribbon graphs and working informally, $G^{\times}$  is obtained by detaching one end of each edge from its incident vertex disc, giving the edge a half-twist, and reattaching it to the vertex disc. The partial petrial $G^{\tau(A)}$ of a ribbon graph $G$ is formed by adding a half-twist only to the edges in $A$. Note that, by equivalence, any number of full-twists may be added or removed from an edge.

\subsection{Medial graphs} \label{Medial graphs}
Let $G$ be a cellularly embedded graph. Its {\em medial graph}, $G_m$, is the embedded graph constructed by  placing a vertex on each edge of $G$, and then drawing the edges of the medial graph by following the face boundaries of $G$ (so  each vertex of $G_m$ is of degree $4$). The medial graph of an isolated vertex is a closed loop. (Medial graphs may thus have {\em free loops}, where a free loop is a circular edge with no incident vertex.) Note that if $G$ is cellularly embedded in a surface $\Sigma$, then $G_m$ is also cellularly embedded in $\Sigma$.  The vertices of $G_m$ correspond to the edges of $G$, and we will generally index $V(G_m)$ using $E(G)$: if $e$ is an edge of $G$, we denote the corresponding vertex of $G_m$ by $v_e$.

A {\em checkerboard colouring} of  a cellularly embedded graph is an assignment of the colour black or white to each face such that adjacent faces receive different colours (i.e., it is a face 2-colouring).
If $G_m$ is the medial graph of $G$, then $G_m$ can be  checkerboard coloured by  colouring the faces corresponding to a vertex of $G$ black, and the remaining faces white.  We call this checkerboard colouring the \emph{canonical checkerboard colouring}. Note that $G$ must be specified to determine the canonical checkerboard colouring of $G_m$, and that medial graphs are always checkerboard colourable.

\section{Graph polynomials and $k$-valuations}\label{s.kval}
\subsection{Topological graph polynomials}\label{sstgp}
We start by briefly reviewing the definitions of the ribbon graph polynomial and the Penrose polynomial.

\subsubsection{The ribbon graph polynomial}

In \cite{BR1} and \cite{BR2}, Bollob\'as and Riordan extended the  Tutte polynomial from graphs to embedded graphs, resulting in the  ribbon graph polynomial, $R(G)$.

   \begin{definition}[Bollob\'as and Riordan \cite{BR2}]\label{defBR}
Let $G$ be an embedded graph.  Then the \emph{ribbon graph polynomial} or  {\em Bollob\'as-Riordan polynomial}, $R(G;x,y,z) \in \mathbb{Z}[x,y,z]$,  is defined by
\[   R(G;x,y,z) = \sum_{A \subseteq E( G)}   (x - 1)^{r( G ) - r( A )}   y^{n(A)} z^{c(A) - f(A) + n(A)}  . \]
  \end{definition}
Although  $R(G)$ often appears with a fourth variable that records the orientability of each embedded spanning subgraph, here we omit the fourth variable as it plays no role in our results.

The   Tutte polynomial, $T(G)$, is a specialisation of $R(G)$:
$ T(G;x,y)=R(G;x,y-1,1)$.  In addition,  as the exponent of $z$ is equal to the Euler genus $\gamma(A)$, we have  that $T(G)$ and $R(G)$ agree when $G$ is a plane graph:
 \begin{equation}\label{TR1b}
T(G;x,y)=R(G;x,y-1, z).
\end{equation}

\subsubsection{The Penrose polynomial}
The Penrose polynomial   was defined implicitly by
Penrose in~\cite{Pen71} for plane graphs and was extended to all embedded graphs  in \cite{EMMb}. This extension to embedded graphs reveals new properties of the Penrose polynomial  that can not be realised exclusively in terms of plane graphs.
The (plane) Penrose polynomial has been defined in terms of bicycle spaces, left-right facial walks, or states of a medial graph. Here, however, we use a formulation of the polynomial from  \cite{EMMb} to define it in terms of partial Petrials.
\begin{definition}\label{d.Penrose}
Let  $G$ be an embedded graph. Then the {\em Penrose polynomial}, $P(G;\la)\in \mathbb{Z}[\la]$ is
\[P(G;\la) := \sum_{A\subseteq E(G)}   (-1)^{|A|} \lambda^{f(G^{\tau(A)})}.\]
\end{definition}

The Penrose polynomial encodes a lot of information about graph colouring.
For example, it is well-known that proving the Four Colour Theorem is equivalent
to proving that every plane, cubic, connected graph can be properly
edge-coloured with three colours.   The Penrose polynomial encodes exactly this information
(see \cite{Pen71}): if $G$ is a plane, cubic, connected graph, then
\begin{equation}\label{e.ptcol}
\text{the number of edge 3-colourings of } G = P\left( {G;3} \right) = \left(  - 1/4
\right)^{\frac{{v(G)}}{2}} P\left( {G; - 2} \right) .
\end{equation}
For further explorations of its properties we refer the reader to the excellent  expositions on the classical Penrose polynomial given by Aigner in~\cite{Ai97} and~\cite{Aig00}, and  to \cite{EMMb} for its extension to embedded graphs.

\subsection{$k$-valuations}\label{ss.kva}

\begin{definition}\label{c4.d.kval}
Let $G$ be an embedded graph and $G_m$ be its canonically checkerboard coloured medial  graph. A {\em $k$-valuation}  of $G_m$ is an edge $k$-colouring $\phi: E(G_m)\rightarrow \{1, 2, \ldots , k\}$ such that for each $i$ and every vertex $v_e$ of $G_m$, the number of $i$-coloured edges incident to $v_e$ is even. We let $\mathcal{K}(G_m,k)$ denote the set of  $k$-valuations of $G_m$.
\end{definition}

Suppose $G_m$ is canonically checkerboard coloured. Then in a $k$-valuation $\phi$, at each vertex $v_e\in V(G_m)$, one number appears four times, or two distinct numbers appear twice. There are exactly four ways that this can happen, and we use the canonical checkerboard colouring to classify the four types $( \phi , v_e )$ as in Figure~\ref{f.kval}.

A $k$-valuation $\phi$ of $G_m$ is:
\begin{itemize}
\item {\em admissible} if $( \phi , v_e)$ is  a white or crossing type for each $v_e\in V(G_m)$, and we let $\mathcal{A}(G_m,k)$ denote the set of admissible $k$-valuations;
 \item {\em $R$-permissible} if $( \phi , v_e)$ is a white, black, or total type for each $v_e\in V(G_m)$, and we let $\mathcal{R}(G_m,k)$ denote the set of $R$-permissible $k$-valuations;
\item {\em $P$-permissible} if $( \phi , v_e)$ is a white, crossing, or total type for each $v_e\in V(G_m)$, and we let $\mathcal{P}(G_m,k)$ denote the set of $P$-permissible $k$-valuations.
\end{itemize}
If $\phi$ is a $k$-valuation, we let $\white(\phi)$, $\black(\phi)$, $\cross(\phi)$, and $\total(\phi)$,  denote, respectively, the total  number of white, black, crossing, and total types in $\phi$.

As mentioned in the introduction, $R(G)$ and $P(G)$ both count $k$-valuations.
Korn and Pak found the following combinatorial evaluation of the ribbon graph polynomial.
 \begin{evaluation}[Korn and Pak \cite{KP03}] \label{t.KP}
Let $G$ be an embedded graph, and  $k \in \mathbb{N}$. Then
\begin{equation}\label{e.KP}
k^{c(G)}  R(G;k+1,k,1/k) = \sum_{  \phi \in \mathcal{R}(G_m,k)  }   2^{\total(\phi)}.
\end{equation}
\end{evaluation}
Turning to the Penrose polynomial, Jaeger proved that when $G$ is a plane graph, $P(G;k)$ can be expressed as a count of $k$-valuations.
\begin{evaluation}[Jaeger \cite{Ja90}]\label{th.addval}
Let $G$ be a plane  graph, and $k \in \mathbb{N}$. Then $P(G;k) =  |\mathcal{A}(G_m,k)| $.
\end{evaluation}
The authors extended Jaeger's interpretation to all embedded graphs by showing that $P(G;k)$ gives a signed count of admissible $k$-valuations:
\begin{evaluation}[\cite{EMMb}]\label{sumcross}  Let $G$ be an embedded graph, and $k \in \mathbb{N}$. Then
\begin{equation}\label{e.sumcross}
P(G;k) = \sum_{  \phi \in \mathcal{A}(G_m,k)  }   (-1)^{\cross(\phi)}.
\end{equation}
\end{evaluation}

We note that Evaluation~\ref{th.addval} can be recovered from Evaluation~\ref{sumcross} since, for plane graphs, $\cross(\phi) \equiv 0\mod 2$ (see \cite{EMMb} for details).

For certain graphs, the Penrose polynomial at negative integers, $P(G;-k)$, can  also be described by a count of $k$-valuations:
\begin{evaluation}[\cite{EMMb}]\label{t.total}  If $G$ is orientable and checkerboard colourable, then
\begin{equation}\label{e.total}
P(G;-k) = (-1)^{f(G)}  \sum_{  \phi \in \mathcal{P}(G_m,k)  }   2^{\total(\phi)}.
\end{equation}
\end{evaluation}

Evaluation \ref{t.total} does not hold for general embedded graphs (for example, consider the 1-path). However, in Evaluation \ref{total2}, we will see that Equation \eqref{e.total} holds for a somewhat larger class of embedded graphs than conditioned here.

\section{Unifying the combinatorial interpretations}\label{s.unif}

\subsection{The transition polynomial} \label{ss.tra}
The generalised transition polynomial, $q(G; W,t)$,  of \cite{E-MS02} is a multivariate graph polynomial that generalises Jaeger's transition polynomial of~\cite{Ja90}.  In \cite{EMMa}, the authors specialised the generalised transition polynomial to embedded graphs, calling this specialisation the \emph{topological transition polynomial}. Although both the generalised and the topological transition polynomials encode vertex-specific weight systems, as in \cite{EMMa} and \cite{E-MS02}, it suffices for our purposes here to use weight systems which are the same at each vertex.  However, several of the results we present may readily be extended to the more general setting should the need arise.

A vertex state at a vertex $v$  of a  $4$-regular graph arises from a partition, into pairs, of the half-edges incident to $v$. If $(u,v)$ and $(w,v)$ are two non-loop edges whose half edges are paired at the vertex $v$, then we replace these two edges with a single edge $(u,w)$, which, in the case of a cellularly embedded graph, lies in an $\varepsilon$-neighbourhood of the original two edges.  In the case of a loop, we temporarily imagine an extra vertex of degree two on the loop, carry out the operation, and then remove the temporary vertex.  After this is done for all pairs in the partition at $v$, the   resulting configuration is called  a    {\em vertex state} at $v$.
If $G$ is an embedded graph and  $G_m$ is its canonically checkerboard coloured medial graph then we may distinguish among the vertex states at $v$ as in Figure~\ref{c1.vstate.f2}.  We call the three vertex states a {\em white smoothing}, a {\em black smoothing}, and a {\em crossing} as defined in the figure.   The white smoothing and black smoothing are known collectively as {\em smoothings}.

\begin{figure}[h]
\centering
\begin{tabular}{ccccccc}
\labellist \small\hair 2pt
\pinlabel {$v$}  at 36 22
\endlabellist
\includegraphics[scale=.45]{m1}
 & \quad \raisebox{5mm}{$\longrightarrow$} \quad  & \includegraphics[scale=.45]{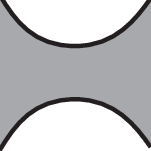} &\hspace{5mm}  & \includegraphics[scale=.45]{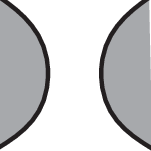} &\hspace{5mm}  &\includegraphics[scale=.45]{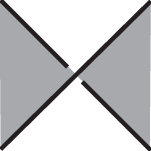} \\
in $G_m$ && white smoothing && black smoothing && crossing.
\\
 && (state weight $\alpha$) && (state weight $\beta$) && (state weight $\gamma$).
\end{tabular}
\caption{The three vertex states of a vertex $v$ of a canonically checkerboard coloured medial graph, and their state weights in the medial weight system $(\alpha, \beta, \gamma)$.}
\label{c1.vstate.f2}
\end{figure}

A {\em graph state} of  a 4-regular graph $F$ is a choice of vertex state at each of its vertices, which results in a set of curves.  The number of these curves is the number of components of the state.  If $F$ has no vertices, and so is either empty or a collection of free loops, then its unique graph state is just itself.
We let $\mathcal{S}(F)$ denote the set of graph states of $F$.
A graph state $s$ consists of a set of disjoint closed curves, and we refer to these as the {\em components} of the state, denoting the number of them by $c(s)$.

Let $G$ be an embedded graph, and $\alpha, \beta, \gamma\in \mathbb{C}$. (For simplicity, we will work with coefficients in $\mathbb{C}$, but our results may be adapted to other rings.)   The {\em medial weight system} for $G_m$ defined by  $(\alpha, \beta, \gamma)$ is the mapping $\omega:\mathcal{S}(G_m)  \rightarrow \mathbb{C}$ given by $\omega(s) := \prod_{v \in V(G_m)} {\omega(v,s)}$
where $(v,s)$ is the vertex state at the vertex $v$ in the state $s$;  and, with respect to the  canonical checkerboard colouring,  $\omega(v,s)=\alpha $  if $(v,s)$ is a white smoothing,     $\omega(v,s)=\beta $ if $(v,s)$ is a black smoothing, and $\omega(v,s)=\gamma $ if $(v,s)$ is a crossing state  (see Figure~\ref{c1.vstate.f2}).

 \begin{definition} \label{Qq}
   The \emph{topological transition polynomial} $Q(G, (\alpha, \beta, \gamma), t) \in \mathbb{C}[t] $ is defined by
\[
Q(G, (\alpha, \beta, \gamma), t) :=\sum_{s \in \mathcal{S}( G_m)} \omega( s )t^{c(s)},
\]
where $\omega$ is the medial weight system for $G_m$ defined by $(\alpha, \beta, \gamma)$.
\end{definition}

Since, as noted in Section \ref{Medial graphs}, the medial graph of an isolated vertex is a free loop, it follows that if $G$ has no edges, then $Q(G, (\alpha, \beta, \gamma), t) = t^{k(G)}$.

The topological transition polynomial was shown in \cite{EMMa} to admit a deletion-contraction-type reduction identity which is akin to that for the Tutte polynomial. Since it holds for vertex specific weights in \cite{EMMa}, it holds also for the version given in Definition \ref{Qq}, yielding the form below in Proposition~\ref{e.qdc}. This identity will be useful in subsequent inductive proofs.

\begin{proposition}\label{e.qdc}
Let $e$ be an edge of an embedded graph $G$.  Recalling that  edge contraction is defined as in Section~\ref{ss.contract}, we have
\begin{equation}
Q(G; (\alpha, \beta, \gamma), t)= \alpha Q(G/e; (\alpha, \beta, \gamma), t) + \beta Q(G-e; (\alpha, \beta, \gamma), t)+\gamma Q(G^{\tau(e)}/e; (\alpha, \beta, \gamma), t).
\end{equation}
 \end{proposition}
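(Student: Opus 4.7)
The plan is to prove the identity directly from the state-sum definition of $Q(G)$ by partitioning $\mathcal{S}(G_m)$ according to the vertex state chosen at the medial vertex $v_e$ corresponding to $e$. Because the medial weight factors as $\omega(s)=\prod_{v\in V(G_m)}\omega(v,s)$, the contribution of $v_e$ factors out as an overall $\alpha$, $\beta$, or $\gamma$, depending on whether the state at $v_e$ is a white smoothing, a black smoothing, or a crossing. Writing $\omega'$ for the product of the local weights over the vertices $v\neq v_e$, and letting $\mathcal{S}_{\mathrm{wh}}$, $\mathcal{S}_{\mathrm{bl}}$, $\mathcal{S}_{\mathrm{cr}}$ denote the corresponding subsets of states, one obtains
\[
Q(G)=\alpha \sum_{s\in \mathcal{S}_{\mathrm{wh}}}\omega'(s)\,t^{c(s)}+\beta \sum_{s\in \mathcal{S}_{\mathrm{bl}}}\omega'(s)\,t^{c(s)}+\gamma \sum_{s\in \mathcal{S}_{\mathrm{cr}}}\omega'(s)\,t^{c(s)}.
\]

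The heart of the argument is then a purely local structural claim at $v_e$: that replacing $v_e$ in $G_m$ by the white smoothing produces $(G/e)_m$; by the black smoothing, $(G-e)_m$; and by the crossing, $(G^{\tau(e)}/e)_m$, in each case with the canonical checkerboard colouring preserved. Granted this, each of the three subsums above is a state sum over the modified medial graph: the component count $c(s)$ is preserved (the relevant resolution at $v_e$ has already been performed) and $\omega'$ agrees with the full medial weight on the modified graph. Thus the three subsums equal $Q(G/e)$, $Q(G-e)$, and $Q(G^{\tau(e)}/e)$, and the identity follows.

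I would verify the structural claim using the arrow-presentation description of embedded graphs, where contraction and Petrie duality both have transparent local descriptions. For a non-loop, joining the two $e$-arrows head-to-tail merges the two black corners at $v_e$, matching the white smoothing and giving $(G/e)_m$; deletion simply removes $v_e$ and reconnects the four half-edges around the white corners, matching the black smoothing and giving $(G-e)_m$; and because $G^{\tau(e)}$ reverses one $e$-arrow, the subsequent contraction identifies the two arrows with a twist, so that locally the four half-edges are paired via the non-planar pairing of a crossing, giving $(G^{\tau(e)}/e)_m$. The orientable and non-orientable loop cases of Table~\ref{tablecontractrg} are then checked by the same local analysis.

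The main obstacle is the verification of the crossing case: tracking how the half-twist inserted by $\tau(e)$ interacts with each of the loop contraction rules takes some care, particularly when $e$ is a non-orientable loop, where both the underlying surface and the canonical checkerboard colouring must be handled explicitly. As noted in the paragraph preceding the statement, this case work is already carried out for the more general vertex-weighted polynomial in \cite{EMMa}; the proposition then follows at once by specialising the vertex-weight triple to the uniform choice $(\alpha,\beta,\gamma)$ at every vertex of $G_m$.
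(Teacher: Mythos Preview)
Your proposal is correct and matches the paper's treatment: the paper does not give a separate proof of this proposition but simply observes, in the paragraph preceding it, that the identity is the specialisation to uniform weights of the vertex-weighted recursion established in \cite{EMMa}, exactly as you note in your final sentence. Your sketch of the underlying state-sum argument (partition $\mathcal{S}(G_m)$ by the state at $v_e$, then identify the three resolutions with $(G/e)_m$, $(G-e)_m$, and $(G^{\tau(e)}/e)_m$) is the right one, and indeed this local identification is precisely what the paper itself uses later in Table~\ref{p.kval.f1} within the proof of Theorem~\ref{t.qk}.
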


\subsection{$k$-valuations and the transition polynomial}

We now see that $Q(G)$ can be expressed as a sum over all $k$-valuations, and this, our first main theorem, will provide the unifying framework for similarities between $R(G)$ and $P(G)$.

\begin{theorem}\label{t.qk}
Let $G$ be an embedded graph, and  $k \in \mathbb{N}$. Then
\begin{equation}\label{e.qk}
  Q(G; ( \alpha, \beta , \gamma ), k) = \sum_{  \phi \in \mathcal{K}(G_m,k)  }   (\alpha+\beta+\gamma)^{\total(\phi)}  \alpha^{\white(\phi)}  \beta^{\black(\phi)}  \gamma^{\cross(\phi)}.
\end{equation}
\end{theorem}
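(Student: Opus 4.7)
The plan is to expand both sides and match via an interchange of summation. Starting from Definition~\ref{Qq} with $t=k$, we have
\[Q(G; (\alpha,\beta,\gamma), k) = \sum_{s \in \mathcal{S}(G_m)} \omega(s)\, k^{c(s)}.\]
The first step is to reinterpret $k^{c(s)}$ as the number of maps $\kappa$ assigning one of $k$ colours to each of the $c(s)$ closed curves comprising $s$. Each pair $(s,\kappa)$ then induces an edge $k$-colouring $\phi_{s,\kappa}$ of $G_m$: every edge of $G_m$ belongs to a unique component of $s$, and receives its colour. At any $v_e \in V(G_m)$, the two half-edges paired by the vertex state at $v_e$ lie in a common component, so they receive the same colour; consequently each colour appears $0$, $2$, or $4$ times at $v_e$, and $\phi_{s,\kappa} \in \mathcal{K}(G_m,k)$.

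Next, I would invert this correspondence. Fix $\phi \in \mathcal{K}(G_m,k)$ and count pairs $(s,\kappa)$ with $\phi_{s,\kappa}=\phi$. Once $s$ is chosen, $\kappa$ is forced (each component must receive the common colour of its edges), so one is left with summing $\omega(s)$ over those $s$ whose vertex state at every $v_e$ pairs half-edges of equal colour. Since $\omega(s) = \prod_{v_e} \omega(v_e,s)$ and the compatibility condition is local at each vertex, this sum factorises as a product over $v_e \in V(G_m)$ of local contributions depending only on the type of $(\phi,v_e)$.

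The core of the proof is the case analysis of these local contributions against the four types in Figure~\ref{f.kval}. If $(\phi,v_e)$ is a white type, the two same-colour pairs of half-edges coincide with the pairing of the white smoothing, and no other vertex state achieves matched pairs; the local contribution is $\alpha$. Analogously, a black type admits only the black smoothing (contribution $\beta$), and a crossing type admits only the crossing (contribution $\gamma$). If $(\phi,v_e)$ is total, all four half-edges share one colour, every pairing matches, and all three vertex states contribute, giving $\alpha+\beta+\gamma$. Taking the product over $v_e$ and summing over $\phi$ yields
\[\sum_{\phi \in \mathcal{K}(G_m,k)} (\alpha+\beta+\gamma)^{\total(\phi)} \alpha^{\white(\phi)} \beta^{\black(\phi)} \gamma^{\cross(\phi)},\]
as desired.

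The main obstacle is precisely this local identification: verifying that the three vertex states of Figure~\ref{c1.vstate.f2} correspond, under the canonical checkerboard colouring, to the white, black, and crossing colour patterns of Figure~\ref{f.kval} in exactly the pairing-compatible way described above. This is the combinatorial content behind the names of the configurations, but it needs to be stated carefully with reference to both figures. Once established, the argument is just the interchange of summation sketched above. An alternative route via induction on $|E(G)|$ using Proposition~\ref{e.qdc} is available, but the bijective argument more transparently explains why the four exponents $\total, \white, \black, \cross$ appear with exactly the weights $\alpha+\beta+\gamma, \alpha, \beta, \gamma$.
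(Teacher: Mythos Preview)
Your argument is correct. The double-counting you set up --- expanding $k^{c(s)}$ as colourings of the components of $s$, pushing the resulting edge-colouring to a $k$-valuation, and then factorising the fibre over each $\phi$ as a product of local contributions --- is sound, and the local case analysis matching Figures~\ref{f.kval} and~\ref{c1.vstate.f2} is exactly the content that makes the names ``white'', ``black'', ``crossing'', ``total'' consistent across the two figures. You are right that this identification is the only point requiring care, and it goes through as you describe.

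The paper takes a genuinely different route: it proves the identity by induction on $|E(G)|$, using the deletion--contraction--Petrial recurrence of Proposition~\ref{e.qdc}. At a fixed edge $e$ the right-hand side of~\eqref{e.qk} is split according to the type of $(\phi,v_e)$, and the three resulting sums are identified (via bijections between $k$-valuations of $G_m$ and of $(G/e)_m$, $(G-e)_m$, $(G^{\tau(e)}/e)_m$) with the three terms of the recurrence. Your direct bijective argument is shorter and, as you say, makes the appearance of the weights $\alpha$, $\beta$, $\gamma$, $\alpha+\beta+\gamma$ transparent; it also avoids any appeal to the recurrence or to the analysis of loop versus non-loop edges. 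The paper's inductive approach, on the other hand, exercises Proposition~\ref{e.qdc} and fits the broader theme of the paper that this three-term recurrence governs $Q$; it would also adapt more mechanically to variants where the state-sum interpretation is less immediate.
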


\begin{proof}
We use induction on the number of edges of an embedded graph $G=(V,E)$. The claim  is easily verified  when $|E|=0$. Now assume that $|E|\neq 0$, and that the claim holds for all embedded graphs with fewer than $|E|$ edges.  For  $\phi \in \mathcal{K}(G_m,k)$ and $U\subseteq V(G_m)$, set
\[  \Theta(G,\phi, U) := \prod_{v_e\in U}   (\alpha+\beta+\gamma)^{\total(\phi,v_e)}  \alpha^{\white(\phi,v_e)}  \beta^{\black(\phi,v_e)}  \gamma^{\cross(\phi,v_e)}.  \]
Fixing some $e\in E$, we can write the right-hand side of Equation \eqref{e.qk} as
\begin{multline}\label{p.kval.e1}
\alpha \Big( \sum_{  \substack{\phi \in \mathcal{K}(G_m,k) \\ (\phi,v_e) \text{ tot. or wh.}   }} \Theta(G,\phi, V(G_m)\backslash \{e\}) \Big)
+\beta \Big( \sum_{  \substack{\phi \in \mathcal{K}(G_m,k) \\ (\phi,v_e) \text{ tot. or bl.}   }} \Theta(G,\phi, V(G_m)\backslash \{e\})\Big)
\\+\gamma \Big( \sum_{  \substack{\phi \in \mathcal{K}(G_m,k) \\ (\phi,v_e) \text{ tot. or cr.}   }} \Theta(G,\phi, V(G_m)\backslash \{e\})\Big).
\end{multline}
We show that the first term in \eqref{p.kval.e1} equals $\alpha Q(G/e; (\alpha,\beta,\gamma), k  )$.
Consider  $G$ locally at an edge $e$, and $G_m$ locally at the corresponding vertex $v_e$ as shown in Table~\ref{p.kval.f1}. Note that in the table we are not assuming that the vertices at the ends of $e$ are distinct or that they lie on the plane on which they are drawn, and so Table~\ref{p.kval.f1} and the following argument also includes the cases when $e$ is an orientable or non-orientable loop. 
Table~\ref{p.kval.f1} also shows $G/e$ and  $(G/e)_m$ at the corresponding locations. All the embedded graphs in the table are identical outside of the region shown. Let $\phi'$ be a $k$-valuation of $(G/e)_m$. Then both of the arcs of  $(G/e)_m$ shown in Table~\ref{p.kval.f1} are coloured with the same element $i$, or they are coloured by different elements $i$ and $j$.  The $k$-valuation $\phi'$ of $(G/e)_m$ naturally induces a $k$-valuation $\phi$ of $G_m$ in which $(\phi,v_e)$ is total or white. As this process is reversible, we have a bijection between
$\mathcal{K}((G/e)_m,k)$   and $ \{ \phi \in \mathcal{K}(G_m,k) \; |\; (\phi,v_e) \text{ total or white}\}$.
Under this bijection
\begin{equation}\label{p.kval.e2}
 \begin{aligned}
 \sum_{  \substack{\phi \in \mathcal{K}(G_m,k) \\ (\phi,v_e) \text{ tot. or wh.}   }} \Theta(G,\phi,  V(G_m)\backslash \{e\}) &=  \sum_{ \phi \in \mathcal{K}((G/e)_m,k) } \Theta(G/e,\phi, V((G/e)_m) \\& = Q(G/e; (\alpha,\beta,\gamma), k  ),
 \end{aligned}
  \end{equation}
where the second equality follows by the inductive hypothesis. Similar arguments give that the second and third sums in \eqref{p.kval.e1} equal $\beta Q(G-e)$ and $\gamma Q(G^{\tau(e)}/e)$ respectively. Thus Equation \eqref{p.kval.e1}  equals $\alpha Q(G/e) + \beta Q(G-e)+\gamma Q(G^{\tau(e)}/e)$, which
 by Proposition \eqref{e.qdc}, is   $Q(G)$.
\end{proof}

\begin{table}
\centering
\begin{tabular}{|c|c|c|c|c|}\hline
  & $G$ & $G/e$ & $G-e$   & $G^{\tau(e)}/e$\\ \hline
 \raisebox{5mm}{Graph} & \labellist \small\hair 2pt
\pinlabel {$e$}  at 64 39
\endlabellist\includegraphics[scale=.45]{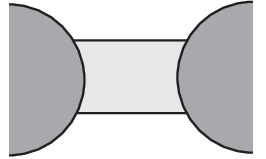} &\includegraphics[scale=.45]{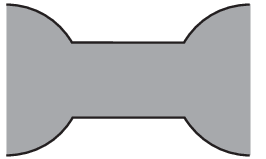}& \includegraphics[scale=.45]{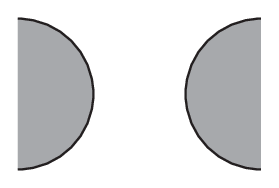}& \includegraphics[scale=.45]{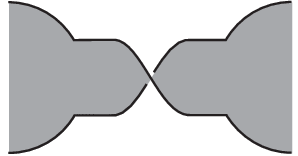} \\ \hline
  \raisebox{5mm}{Medial graph} &\labellist \small\hair 2pt
\pinlabel {$v_e$}  at 78 20
\endlabellist \includegraphics[scale=.4]{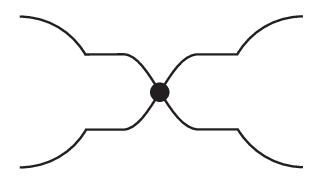} &\includegraphics[scale=.4]{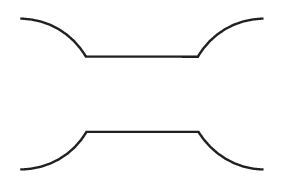}&\includegraphics[scale=.4]{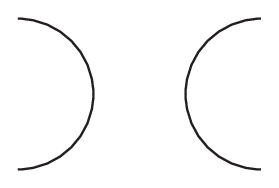}&  \includegraphics[scale=.4]{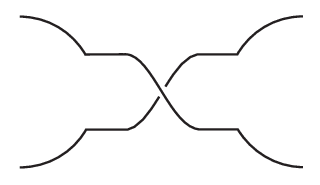} \\ \hline

\hline
\end{tabular}
\caption{Local differences in $G$, $G/e$, $G-e$, $G^{\tau(e)}/e$, and their medial graphs for a non-loop edge $e$}
\label{p.kval.f1}
\end{table}

\subsection{The ribbon graph and Penrose polynomials at positive integers}
In this section we show that Evaluations~\ref{t.KP}--\ref{sumcross}  arise as special cases of Theorem~\ref{t.qk}. For this we need the following result which shows that $R(G)$ and $P(G)$ are assimilated by the transition polynomial. This result provides the unifying framework for explaining the connections among the evaluations of $R$ and $P$.
\begin{proposition}[\cite{ES} and \cite{EMMb}]\label{p.qmbr}
If $G$ is an embedded graph. Then
\begin{align}
Q\big(G ;\big(\sqrt{y/x}, 1, 0\big), \sqrt{xy}\big) &= \label{e.qmbr0} x^{c(G)}  (\sqrt{y/x})^{v(G)}   R(G;  x+1, y, 1/\sqrt{xy})
 \\
\label{e.qmbr2}   Q(G ;(1, 0, -1), \lambda)    &= P(G; \lambda).
\end{align}
\end{proposition}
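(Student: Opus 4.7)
The plan is to evaluate both sides of each identity via the state-sum definition of $Q$ in Definition~\ref{Qq} and then match the result to the spanning-subgraph expansion of $R$ or $P$.

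For \eqref{e.qmbr0}, setting $\gamma=0$ in the state sum annihilates every graph state of $G_m$ that contains a crossing, so only states built entirely from smoothings survive. Such states are naturally indexed by subsets $A\subseteq E(G)$ under the convention that, at each vertex $v_e$ of $G_m$, a white smoothing is taken when $e\in A$ and a black smoothing when $e\notin A$. The geometric key is that, with this convention, the number of curves $c(s)$ of the resulting state equals $f(A)$, the number of boundary components of the spanning ribbon subgraph $(V(G),A)$; this is a standard medial-graph fact that one can prove by induction on $|A|$ or directly from arrow presentations, and that one can sanity-check on a single bridge, where $A=\emptyset$ gives two boundary circles and $A=\{e\}$ gives one. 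The state sum then reads
\[
Q(G;(\sqrt{y/x},1,0),\sqrt{xy})=\sum_{A\subseteq E(G)}(\sqrt{y/x})^{|A|}(\sqrt{xy})^{f(A)}.
\]
Using Euler's relation $f(A)=2c(A)-\gamma(A)+|A|-v(G)$ together with $r(A)=v(G)-c(A)$ and $n(A)=|A|-r(A)$, the summand rewrites (after pulling out the $A$-independent prefactor $x^{c(G)}(\sqrt{y/x})^{v(G)}$) as $x^{r(G)-r(A)}y^{n(A)}(xy)^{-\gamma(A)/2}$, which is precisely the summand in $R(G;x+1,y,1/\sqrt{xy})$.

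For \eqref{e.qmbr2}, setting $\beta=0$ restricts the state sum to states whose vertex states are all white smoothings or crossings. Indexing such a state by the set $A\subseteq E(G)$ of vertices where a crossing is taken, the state weight collapses to $1^{|E(G)|-|A|}(-1)^{|A|}=(-1)^{|A|}$. The crucial geometric claim here is that $c(s)=f(G^{\tau(A)})$: the all-white-smoothing state traces the boundary circles of the ribbon graph $G$ itself, so contributes $f(G)$ components, and replacing the white smoothing at $v_e$ by a crossing corresponds in the ribbon-graph picture to inserting a half-twist on the edge $e$, i.e.\ to performing the partial Petrial $\tau(e)$. Iterating edge by edge yields $c(s)=f(G^{\tau(A)})$, and substituting into the state sum gives exactly the sum in Definition~\ref{d.Penrose}.

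The main obstacle in both parts is the geometric identification between the curve count of a medial-graph state and the boundary-component count of a (partially twisted) ribbon graph. The cleanest rigorous route is through arrow presentations, where white smoothings, crossings, and partial Petrials all admit transparent local descriptions. A comfortable alternative, should the geometric picture become unwieldy, is to verify instead that both sides of each identity satisfy the same three-term deletion-contraction reduction, with the $Q$-side governed by Proposition~\ref{e.qdc} and the $R$- and $P$-sides satisfying compatible recursions (the latter involving the partial Petrial of a single edge); agreement on the edgeless graph then forces agreement in general.
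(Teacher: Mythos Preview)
The paper does not supply its own proof of this proposition; it is quoted from \cite{ES} and \cite{EMMb} and used as a black box. Your direct verification via the state-sum definition is correct and is essentially the standard argument found in those sources: the identifications $c(s)=f(A)$ for the all-smoothing states (with white smoothing at $v_e$ iff $e\in A$) and $c(s)=f(G^{\tau(A)})$ for the white/crossing states are exactly the right geometric inputs, and your Euler-formula bookkeeping to match the $R$-summand checks out. The recursive alternative you sketch at the end, matching both sides through Proposition~\ref{e.qdc}, is equally valid and is in fact closer in spirit to how several later results in the paper are proved.
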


The specialisation  $R(G; x+1,y,1/\sqrt{xy})$ that appears in Equation~\eqref{e.qmbr0} is significant as it as appears in many results about the ribbon graph polynomial,  for example in duality relations  (see \cite{BR2,ES,Mo1}) and in connections to knot theory (see for example \cite{CP,Da,Mo1}). In addition, \mbox{$R(G; x+1,y,1/\sqrt{xy})$} has a deletion-contraction relation that reduces it to a linear combination of polynomials of edgeless ribbon graphs (see  \cite{Ch1}), which are easily computed. In contrast, the known deletion-contraction reduction for the three-variable polynomial $R(G;x,y,z)$  will only reduce it to a linear combination of polynomials of ribbon graphs in which every edge is a loop, and there is not currently an efficient method for computing these forms.

Proposition~\ref{p.qmbr} allows us to unify Evaluations~\ref{t.KP}--\ref{sumcross} and to explain their similarities  by showing that they are all special cases of Theorem~\ref{t.qk}.  In fact, this framework allows us to extend Korn and Pak's  Evaluation \ref{c.qk}  as seen  in \eqref{e.qk1} below.
\begin{evaluation}\label{c.qk}
Let $G$ be an embedded graph, and  $k \in \mathbb{N}$. Then
\begin{align}
\label{e.qk2} k^{c(G)}  R(G;k+1,k,1/k) &= \sum\limits_{  \phi \in \mathcal{R}(G_m,k)  }   2^{\total(\phi)}; \\
\label{e.qk3}  P(G;k) &= \sum\limits_{  \phi \in \mathcal{A}(G_m,k)  }   (-1)^{\cross(\phi)};\\
\label{e.qk1}     k^{c(G)}  b^{r(G)}   R\Big(G;  \frac{k+b}{b}, bk, \frac{1}{k}\Big)    &= \sum\limits_{  \phi \in \mathcal{R}(G_m,k)  }    (b+1)^{\total(\phi)}     b^{\white(\phi)}; \\
\label{e.qk4} T(G; k/b+1 ,kb+1) &=  \sum\limits_{  \phi \in \mathcal{R}(G_m,k)  }    (b+1)^{\total(\phi)}     b^{\white(\phi)}, \quad \text{when $G$ is plane}.
\end{align}
\end{evaluation}

\begin{proof} We prove Equation \eqref{e.qk1} first. For this, observe that by Equations~\eqref{e.qmbr0}, and~\eqref{e.qk},
\begin{align*}k^{c(G)}  b^{r(G)}   R(G;  (k+b)/b, bk, 1/k)&=Q(G; (b, 1, 0), k)
\\ & =\sum_{  \phi \in \mathcal{K}(G_m,k)  }   (b+1)^{\total(\phi)}  b^{\white(\phi)}  1^{\black(\phi)} 0^{\white(\phi)}.
\end{align*}
Note that $(b+1)^{\total(\phi)}  b^{\white(\phi)}  1^{\black(\phi)} 0^{\white(\phi)}=0$, whenever $\white(\phi)\neq 0$, and is $1$ otherwise. Therefore we can sum over  $ \mathcal{R}(G_m,k)$  and drop the $0^{\white(\phi)}$ term, giving the result.

Equation~\eqref{e.qk2}  follows from Equation  \eqref{e.qk1} by setting $b=1$.
 Equation~\eqref{e.qk3} is proved in a similar way to \eqref{e.qk1}, but using   \eqref{e.qmbr2} instead of \eqref{e.qmbr0}. We omit the details of this argument.
Equation~\eqref{e.qk4}  follows from Equation \eqref{e.qk1} as $T(G)$ and $R(G)$ agree on plane graphs as in Equation \eqref{TR1b}.  \end{proof}

\subsection{Duality and the Penrose polynomial at negative integers} \label{ss.dpn}
We have seen that Evaluations~\ref{t.KP}--\ref{sumcross} can be recovered from, and even extended by,  Theorem~\ref{t.qk}. We will now see that  Theorem~\ref{t.qk} can also be used to recover and extend Evaluation~\ref{t.total}.  To do this we exploit the fact that the transition polynomial is well behaved under both geometric and Petrie duality. This fact will allow us to obtain  Evaluation~\ref{t.total} from the special case of Theorem~\ref{t.qk} that appears  in Korn and Pak's Evaluation~\ref{t.KP}.
This is a very different approach from that used in the original proof of  Evaluation~\ref{t.total}  in \cite{EMMb} which relied upon a connection between $P(G)$ and  the circuit partition polynomial of \cite{EM98}.

It is well-known that the Tutte polynomial satisfies a geometric duality relation for plane graphs: $T(G;x,y)=T(G^*;y,x)$. An analogous geometric duality identity was determined for $Q(G)$ in \cite{ES}, and a much more general twisted duality identity for $Q(G)$ was given in \cite{EMMa}. In this paper we will use the following special cases of the twisted duality relation for $Q(G)$.
\begin{proposition}[\cite{EMMa,ES}]\label{t.Qdual}
Let $G$ be an embedded graph. Then
\begin{align}
\label{e.Qdual1} Q(G ; (\alpha , \beta, \gamma) , t )  &=  Q(G^* ; (  \beta, \alpha,  \gamma) , t ) ,\\
\label{e.Qdual2} Q(G ; (\alpha , \beta, \gamma) , t )  &=  Q(G^{\times} ; ( \gamma,  \beta, \alpha ) , t ) .
\end{align}
\end{proposition}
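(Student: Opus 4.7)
The plan is to prove both identities directly from the state-sum definition of $Q$ in Definition~\ref{Qq}, by exhibiting natural bijections between the sets of graph states $\mathcal{S}(G_m)$, $\mathcal{S}((G^*)_m)$, and $\mathcal{S}((G^{\times})_m)$ that preserve the number of components $c(s)$ while permuting the three vertex state types. Once such bijections are in place, matching the state weights in each case gives the claimed permutation of $(\alpha,\beta,\gamma)$ on the nose.

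For the geometric duality identity~\eqref{e.Qdual1}, I would use the classical fact that $G_m = (G^*)_m$ as embedded graphs: the medial construction depends symmetrically on the vertex-face structure, and edges of $G$ and $G^*$ are in natural bijection with the same local neighbourhood in the surface. Under this identification, the canonical checkerboard colouring of $(G^*)_m$ is the complement of that of $G_m$, since the face of $G_m$ corresponding to a vertex of $G$ (black) coincides with the face of $(G^*)_m$ corresponding to a face of $G^*$ (white). Consequently, at each vertex $v_e$, the ``white smoothing'' and ``black smoothing'' states are interchanged while the crossing state is preserved. Since every graph state has the same underlying collection of closed curves and hence the same $c(s)$, the state sums agree after swapping $\alpha$ and $\beta$.

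For the Petrie duality identity~\eqref{e.Qdual2}, I would work in the ribbon graph picture, where $G^{\times}$ is obtained by adding a half-twist to each edge of $G$. A direct local inspection at a single edge $e$ shows that this half-twist alters the cyclic order of the four arcs of the medial graph incident to $v_e$ in such a way that the white-smoothing and crossing vertex states are exchanged, while the black-smoothing state (which pairs the two ``vertex-corner'' arcs) is unchanged. This again gives a components-preserving bijection between state sets, now swapping $\alpha$ and $\gamma$.

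The main obstacle is making the Petrie case rigorous: the local picture of the medial graph at a half-twisted edge is easy to draw but a little fiddly to write down carefully, because one must track how the four incident arcs reattach and how the surrounding checkerboard regions reassemble. A cleaner alternative, should the pictorial argument prove unwieldy, is to argue by induction on $|E(G)|$ using the deletion-contraction relation of Proposition~\ref{e.qdc}, together with the commutation identities $(G-e)^* = G^*/e^*$, $(G/e)^* = G^* - e^*$, $(G^{\times})-e = (G-e)^{\times}$, and $(G^{\times})^{\tau(e)}/e = G/e$; these reduce each identity to its counterpart on graphs with one fewer edge. In either guise, both identities are special cases of the general twisted duality theorem of \cite{EMMa}, so the proof can ultimately be compressed into a citation.
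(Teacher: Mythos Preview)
The paper does not actually prove Proposition~\ref{t.Qdual}; it is stated with a citation to \cite{EMMa,ES} and used as a black box. Your direct state-sum argument is correct and supplies what the paper omits: for \eqref{e.Qdual1}, the identification $G_m=(G^*)_m$ together with the fact that geometric duality complements the canonical checkerboard colouring is exactly right and immediately swaps the white and black smoothing weights; for \eqref{e.Qdual2}, the local effect of a half-twist on the edge disc does interchange the white smoothing and the crossing while fixing the black smoothing, and the resulting bijection on states preserves $c(s)$.

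One small slip in your inductive alternative: the identity you wrote as $(G^{\times})^{\tau(e)}/e = G/e$ should be $(G^{\times})^{\tau(e)}/e = (G/e)^{\times}$, and you also need $G^{\times}/e = (G^{\tau(e)}/e)^{\times}$, which you did not list. With these two corrections the induction via Proposition~\ref{e.qdc} goes through cleanly, matching the three terms on each side after applying the inductive hypothesis to graphs with one fewer edge. Either route is a valid proof; your closing remark that the whole thing can be compressed to a citation of the twisted duality theorem in \cite{EMMa} is precisely what the paper does.
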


The following theorem is a central result in this paper. It is an easy consequence of Propositions~\ref{p.qmbr} and  \ref{t.Qdual}.  Its importance lies in that it provides a  useful connection between $R(G)$ and $P(G)$, and that it helps explain why they give such similar evaluations.  Furthermore, Theorem~\ref{PtoZ} will subsequently lead to additional identities and combinatorial interpretations of evaluations of both polynomials.

\begin{theorem}\label{PtoZ}
Let $G$ be an embedded graph. Then
\begin{equation}\label{eq.PtoZ}
P(G^*;\lambda) =  \lambda^{c(G^{\times})}  (-1)^{r(G^{\times})}   R(G^{\times};  1-\lambda, -\lambda, 1/\lambda).
\end{equation}
\end{theorem}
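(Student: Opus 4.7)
The strategy is to translate the Penrose polynomial on $G^*$ into the transition polynomial, use the two duality identities from Proposition~\ref{t.Qdual} to move from $G^*$ to $G^{\times}$, and then invoke the ribbon-graph specialisation in Proposition~\ref{p.qmbr} with a carefully chosen branch of the square root.

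First, by Equation~\eqref{e.qmbr2} we rewrite the left-hand side as
\[
P(G^*;\lambda) = Q(G^*;(1,0,-1),\lambda).
\]
Next apply the geometric duality identity \eqref{e.Qdual1}, which swaps the first two entries of the weight triple, to obtain
\[
Q(G^*;(1,0,-1),\lambda) = Q(G;(0,1,-1),\lambda),
\]
and then apply the Petrie duality identity \eqref{e.Qdual2}, which swaps the first and third entries, to reach
\[
Q(G;(0,1,-1),\lambda) = Q(G^{\times};(-1,1,0),\lambda).
\]

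Now I would match the weight triple $(-1,1,0)$ against the triple $(\sqrt{y/x},1,0)$ appearing in Equation~\eqref{e.qmbr0}: this forces $y/x = 1$ with the branch $\sqrt{y/x} = -1$, together with $\sqrt{xy} = \lambda$. Choosing $x = y = -\lambda$ realises both constraints simultaneously. Substituting into \eqref{e.qmbr0} applied to $G^{\times}$ gives
\[
Q(G^{\times};(-1,1,0),\lambda) = (-\lambda)^{c(G^{\times})} (-1)^{v(G^{\times})} R(G^{\times}; 1-\lambda, -\lambda, 1/\lambda).
\]

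Finally I collect signs. Since $v(G^{\times}) = r(G^{\times}) + c(G^{\times})$, the total sign is
\[
(-1)^{c(G^{\times}) + v(G^{\times})} = (-1)^{2c(G^{\times}) + r(G^{\times})} = (-1)^{r(G^{\times})},
\]
which yields the claimed identity. The only subtle point is the choice of branch $\sqrt{y/x} = -1$, but since \eqref{e.qmbr0} is an identity of polynomials that is valid for either branch of the square roots (any resulting sign ambiguity is absorbed by the $(\sqrt{y/x})^{v(G)}$ factor on the right), this substitution is legitimate; everything else is a straightforward chain of rewrites.
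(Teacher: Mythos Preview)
Your proof is correct and follows exactly the same chain of equalities as the paper's proof: Equation~\eqref{e.qmbr2}, then the two duality identities from Proposition~\ref{t.Qdual}, then Equation~\eqref{e.qmbr0}. The only difference is that you spell out the substitution $x=y=-\lambda$ (with $\sqrt{y/x}=-1$, $\sqrt{xy}=\lambda$) and the sign simplification via $v(G^{\times})=r(G^{\times})+c(G^{\times})$, whereas the paper leaves these implicit.
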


\begin{proof} We have
\begin{multline*}  P(G^*;\lambda)
= Q\left(G^* ;(1, 0, -1), \lambda \right)
= Q\left(G ;(0, 1, -1), \lambda \right)\\
= Q\left(G^{\times} ;(-1, 1, 0), \lambda \right)
=\lambda^{c(G^{\times})}  (-1)^{r(G^{\times})}   R(G^{\times};  1-\lambda, -\lambda, 1/\lambda),
\end{multline*}
where the first equality follows from  Equation~\eqref{e.qmbr2}, the second and third  from  Theorem~\ref{t.Qdual}, and the fourth  from Equation~\eqref{e.qmbr0}.
 \end{proof}

We now use Theorem \ref{PtoZ} to expand the class of graphs for which Evaluation \ref{t.total} applies, and to explain its connection to Theorem~\ref{t.qk}.
\begin{evaluation}\label{total2} Let $G$ be an embedded graph such that $G^{*\times}$ is orientable. Then
\begin{equation}\label{e.total2}
P(G;-k) = (-1)^{f(G)}  \sum_{  \phi \in \mathcal{P}(G_m,k)  }   2^{\total(\phi)}.
\end{equation}
\end{evaluation}

\begin{proof}
If $G$ is orientable, then the $z$'s in $R(G;x, y, z)$ always appear with even exponents. Thus, if $G$ is orientable,
\[
k^{c(G)}R(G;k+1, k, -1/k) =k^{c(G)}R(G;k+1, k, 1/k)
=  \sum\limits_{  \phi \in \mathcal{R}(G_m,k)  }   2^{\total(\phi)},
\]
where the last equality follows from Evaluation~\ref{t.KP}.
Using  Theorem~\ref{PtoZ}, when  $G^{*\times  }$ is orientable, we have
\[ P(G;-k)
=(-k)^{c(G^{*\times})}  (-1)^{r(G^{*\times})}   R(G^{*\times};  k+1, k, -1/k)
= (-1)^{f(G)} \sum\limits_{  \phi \in \mathcal{R}((G^{*\times})_m,k)  }   2^{\total(\phi)} ,\]
where the third equality follows since $v(G^{*\times})= v(G^{*}) = f(G)$.

To rewrite the $R$-permissible $k$-valuations of $(G^{*\times})_m,$ in terms of $P$-permissible $k$-valuations of $G_m$, consider Figure~\ref{c4.f.KP1}.
An edge $e$ in the arrow presentations of $G$ and $G^{*\times}$ is shown in Figures~\ref{c4.f.KP1a} and ~\ref{c4.f.KP1b}. The top row of the table in Figure~\ref{c4.f.KP1c} shows the vertex $v_e$ and boundary components of $G_m$  and $(G^{*\times})_m$, with the shading indicating the canonical checkerboard colouring at $v_e$. The first column shows, at $v_e$, the $P$-permissible $k$-valuations of $G_m$, and the second column shows the $R$-permissible $k$-valuations of $(G^{*\times})_m$. We see from the table that there is a bijection between the $P$-permissible $k$-valuations of $G_m$ and the $R$-permissible $k$-valuations of $(G^{*\times})_m$. Moreover, this bijection preserves the number of total vertices in the $k$-valuations. Thus,
\[ \sum\limits_{  \phi \in \mathcal{R}((G^{*\times})_m,k)  }   2^{\total(\phi)} = \sum_{  \phi \in \mathcal{P}(G_m,k)  }   2^{\total(\phi)}.\]
 The result then follows.
\end{proof}

\begin{figure}
\centering
\subfigure[An arrow presentation for an edge $e$ of $G$. ]{
\hspace{5mm} \includegraphics[height=15mm]{a1}\hspace{5mm}
\label{c4.f.KP1a}
}
\hspace{2cm}
\subfigure[The arrow presentation for the edge $e$ of $G^{*\times}$. ]{
\hspace{5mm} \includegraphics[height=15mm]{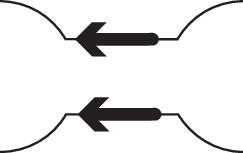}\hspace{5mm}
\label{c4.f.KP1b}
}
\subfigure[A table showing $G_m$ at $v_e$, and its $P$-permissible $k$-valuations in the first column; and $(G^{*\times})_m$ at $v_e$, and its $R$-permissible $k$-valuations in the second column.]{

\hspace{2cm}
\begin{tabular}{cccc}
& $G_m$&  \hspace{15mm} & $(G^{*\times})_m$ \\
&
\includegraphics[height=15mm]{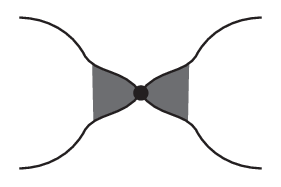}& & \includegraphics[height=15mm]{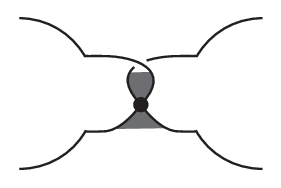} \\
&
\labellist \small\hair 2pt
\pinlabel {$i$}  at 3 9
\pinlabel {$i$}  at 3 81
\pinlabel {$i$}  at 131 81
\pinlabel {$i$}  at 131 9
\endlabellist
\includegraphics[height=15mm]{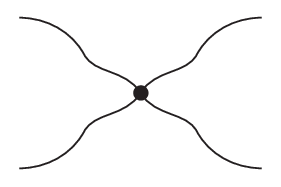}& &
\labellist \small\hair 2pt
\pinlabel {$i$}  at 3 9
\pinlabel {$i$}  at 3 81
\pinlabel {$i$}  at 131 81
\pinlabel {$i$}  at 131 9
\endlabellist\includegraphics[height=15mm]{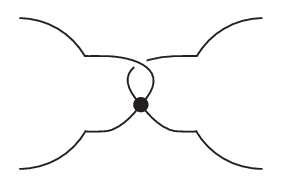} \\
&
\labellist \small\hair 2pt
\pinlabel {$j$}  at 3 9
\pinlabel {$i$}  at 3 81
\pinlabel {$i$}  at 131 81
\pinlabel {$j$}  at 131 9
\endlabellist
\includegraphics[height=15mm]{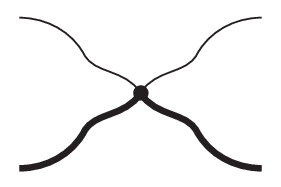}& &
\labellist \small\hair 2pt
\pinlabel {$j$}  at 3 9
\pinlabel {$i$}  at 3 81
\pinlabel {$i$}  at 131 81
\pinlabel {$j$}  at 131 9
\endlabellist
 \includegraphics[height=15mm]{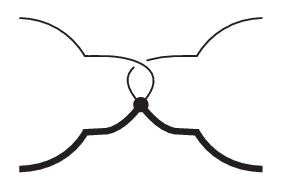} \\
&
\labellist \small\hair 2pt
\pinlabel {$j$}  at 3 9
\pinlabel {$i$}  at 3 81
\pinlabel {$j$}  at 131 81
\pinlabel {$i$}  at 131 9
\endlabellist
 \includegraphics[height=15mm]{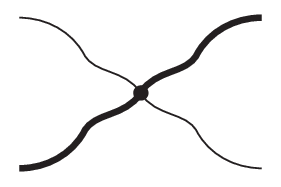}& &
 \labellist \small\hair 2pt
\pinlabel {$j$}  at 3 9
\pinlabel {$i$}  at 3 81
\pinlabel {$j$}  at 131 81
\pinlabel {$i$}  at 131 9
\endlabellist
  \includegraphics[height=15mm]{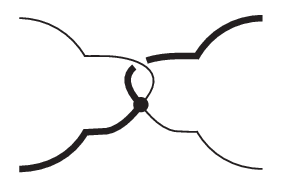} \\
  & $P$-permissible && $R$-permissible
\end{tabular}
\hspace{2cm}

\label{c4.f.KP1c}
}
\caption{Identifying the $P$-permissible $k$-valuations of $G_m$ and the $R$-permissible  $k$-valuations of $(G^{*\times})_m$.}
\label{c4.f.KP1}
\end{figure}

\begin{remark}
To see that Evaluation~\ref{total2} generalises  Evaluation~\ref{t.total}, observe that $G^{*\times}$ being orientable is a weaker condition than $G$ being orientable and checkerboard colourable.  For example, let $G$ be the
embedded graph consisting of one vertex and three non-orientable loops that meet the vertex in the cyclic order $(a\,b\,c\,a\,b\,c)$. Then $G^{*\times}$ is orientable (it is a plane 3-cycle), but $G$ is neither orientable nor checkerboard colourable.
On the other hand, if  $G$ is is orientable and checkerboard colourable, then  $G^*$ is is orientable and bipartite, so $G^{*\times}$ is orientable (every cycle of $G^*$ has even length, so receives an even number of half-twists).
\end{remark}

\begin{remark}
Although we have focused on $R(G)$ in this paper to facilitate connections with the current literature, Theorem \ref{PtoZ} and  several of the other results have much simpler formulations in terms of the topochromatic polynomial, which is just a reparameterisation of $R$:   $Z(G; x,y,z) = (xc/z)^{c(G)}y^{v(G)}R(G;(xz+y)/y, yz, 1/z)$ (see \cite{Mo1}).  For example, Equation~\eqref{eq.PtoZ} becomes simply  $P(G^*;\lambda) = Z(G^{\times};1, -1, \lambda)$.
\end{remark}

\section{Graph Colouring Connections}\label{s.furth}

\subsection{The chromatic polynomial and the Penrose polynomial}
In \cite{Ai97}, Aigner proved the following result which connects the Penrose polynomial of a plane graph $G$ and the chromatic polynomial $\chi(G^*, \lambda)$ of its geometric dual $G^*$.

\begin{theorem}[Aigner \cite{Ai97}]  \label{t.aigner}
Let $G$ be a plane graph, and $k\in \mathbb{N}$. Then  \begin{equation}\label{t.aigner.e1}\chi(G^*;k)\leq P(G;k).\end{equation}
\end{theorem}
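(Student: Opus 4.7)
The plan is to deduce the inequality directly from the combinatorial description of $P(G;k)$ as a count of admissible $k$-valuations. For plane $G$, Evaluation~\ref{th.addval} gives $P(G;k)=|\mathcal{A}(G_m,k)|$, so it suffices to exhibit an injection from the set of proper $k$-colourings of $G^*$ into $\mathcal{A}(G_m,k)$.

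The injection I have in mind goes as follows. Equip $G_m$ with its canonical checkerboard colouring, so that the white faces of $G_m$ correspond to the faces of $G$, equivalently to the vertices of $G^*$. Each edge of $G_m$ bounds exactly one white face (since in any checkerboard colouring faces alternate across edges), so a proper $k$-colouring $c\colon V(G^*)\to\{1,\dots,k\}$ determines an edge colouring $\phi_c\colon E(G_m)\to\{1,\dots,k\}$ by assigning each edge the value that $c$ takes on the vertex of $G^*$ corresponding to its incident white face.

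The substantive check is that $\phi_c\in\mathcal{A}(G_m,k)$. At a vertex $v_e$ of $G_m$ the four incident edges split into two pairs according to which of the two white faces at $v_e$ they bound, and these two white faces correspond to the two endpoints of $e^*\in E(G^*)$, which are adjacent in $G^*$ and hence receive distinct colours $i\neq j$ under $c$. The resulting colour pattern at $v_e$ matches the white configuration of Figure~\ref{f.kval}, so $\phi_c$ is admissible (indeed, has no crossing configurations). Injectivity is immediate, since from $\phi_c$ one recovers $c$ on each vertex of $G^*$ by reading off the colour of any edge of $G_m$ bounding the corresponding white face. The bound $\chi(G^*;k)\leq|\mathcal{A}(G_m,k)|=P(G;k)$ follows.

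The only delicate point is confirming that the local pattern at $v_e$ really is a white configuration rather than a black, crossing, or total one; this is a direct local check from the conventions of the canonical checkerboard colouring, and is essentially the whole content of the argument. As a bonus, the same analysis identifies $P(G;k)-\chi(G^*;k)$ with the number of admissible $k$-valuations of $G_m$ that carry at least one crossing configuration, which both sharpens Aigner's inequality and foreshadows the sum-of-chromatic-polynomials formula promised in Theorem~\ref{c41.n5}.
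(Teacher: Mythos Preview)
The paper does not itself prove Theorem~\ref{t.aigner}; it is quoted as Aigner's result, with the paper's own contribution being the refinements in Theorems~\ref{n3} and~\ref{c41.n5}. Your argument is correct, and it is in fact precisely the mechanism underlying those refinements. The map $c\mapsto\phi_c$ you construct is the $A=\emptyset$ instance of the bijection built in the proof of Theorem~\ref{c41.n5}, which identifies proper $k$-colourings of $(G^{\tau(A)})^*$ with admissible $k$-valuations of $G_m$ whose set of crossing vertices is exactly $A$. Your local check that each $v_e$ receives a white configuration is the observation that, for $A=\emptyset$, the monochromatic cycles of the valuation trace the face boundaries of $G$. (The one tacit edge case---$e$ a bridge, so that the two white faces at $v_e$ coincide---is harmless, since then $G^*$ has a loop and $\chi(G^*;k)=0$.)

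One small point on your bonus claim: identifying $P(G;k)-\chi(G^*;k)$ with the number of admissible valuations carrying at least one crossing requires not just injectivity of $c\mapsto\phi_c$ but surjectivity onto the crossing-free admissible valuations. This does hold---in any crossing-free admissible valuation the edges bounding a fixed white face all share a colour, since consecutive boundary edges meet at a white-type vertex and hence agree---but it deserves to be said explicitly. With that addition, your bonus observation is exactly the content of Theorem~\ref{n3}.
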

In \cite{EMMb}, the authors extended Aigner's result by expressing, when  $G$ is plane, $P(G;\lambda)$ as a sum of chromatic polynomials in which $\chi(G^*;\lambda)$ appears as a single summand:
\begin{theorem}[\cite{EMMb}]\label{n3}
Let $G$ be  a plane graph.  Then
\begin{equation}\label{n3.e1}  P(G;\lambda) = \sum_{A\subseteq E(G)}  \chi ((   G^{\tau(A)}   )^*   ;\lambda)  .\end{equation}
\end{theorem}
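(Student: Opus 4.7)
The plan is to derive Theorem~\ref{n3} by combining Jaeger's combinatorial interpretation (Evaluation~\ref{th.addval}) with a refined bookkeeping of admissible $k$-valuations according to the position of their crossing vertices. Since both sides of \eqref{n3.e1} are polynomials in $\lambda$, it suffices to verify the identity for all $k\in\mathbb{N}$.

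I would begin from Evaluation~\ref{th.addval}, which gives $P(G;k)=|\mathcal{A}(G_m,k)|$ for plane $G$, and then stratify this count. For $\phi\in\mathcal{A}(G_m,k)$, let $A(\phi)\subseteq E(G)$ denote the set of edges $e$ whose medial vertex $v_e$ is of crossing type under $\phi$ (recall $V(G_m)$ is indexed by $E(G)$). Writing $N_A(k):=|\{\phi\in\mathcal{A}(G_m,k):A(\phi)=A\}|$, we immediately get
\[
P(G;k)\;=\;\sum_{A\subseteq E(G)}N_A(k).
\]

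The central geometric step is then to rewrite $N_A(k)$ as a count of all-white $k$-valuations of the medial graph of the partial Petrial $G^{\tau(A)}$. The key local observation is that, at the vertex $v_e\in V(G_m)$, the partial Petrial $\tau(e)$ (inserting a half-twist in the ribbon $e$, equivalently reversing one of the two $e$-labelled arrows in the arrow presentation) preserves the underlying $4$-regular medial graph but exchanges the roles of the ``white'' and ``crossing'' vertex configurations at $v_e$, while leaving the classification at every other medial vertex unaffected. Granting this local swap, the admissible $k$-valuations of $G_m$ with crossing set exactly $A$ are in natural bijection with those $k$-valuations of $(G^{\tau(A)})_m$ that are white at every vertex, so
\[
N_A(k)\;=\;\bigl|\bigl\{\phi\in\mathcal{K}((G^{\tau(A)})_m,k):(\phi,v)\text{ is white for all }v\in V((G^{\tau(A)})_m)\bigr\}\bigr|.
\]

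The remaining step is to identify these all-white $k$-valuations with proper vertex colourings of $(G^{\tau(A)})^*$. At each medial vertex $v_e$, the white configuration forces the two edges of $(G^{\tau(A)})_m$ that border the same face of $G^{\tau(A)}$ at $v_e$ to share a colour, and forces the two faces of $G^{\tau(A)}$ incident to $e$ to receive distinct colours. Since each face-boundary walk of $G^{\tau(A)}$ is a cycle of medial edges, a valuation that is white at every vertex is constant along each such walk, and distinct across any edge. Thus all-white $k$-valuations of $(G^{\tau(A)})_m$ are in bijection with proper face $k$-colourings of $G^{\tau(A)}$, i.e., with proper vertex $k$-colourings of $(G^{\tau(A)})^*$; hence $N_A(k)=\chi((G^{\tau(A)})^*;k)$. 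Summing over $A$ and invoking the polynomial identity principle finishes the proof.

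The main obstacle is the local-swap claim in the middle step: one must verify, in the ribbon-graph or arrow-presentation picture, that inserting a half-twist on a single edge $e$ of $G$ has exactly the local effect of swapping the ``white'' and ``crossing'' patterns at $v_e$ (and preserves the other patterns), so that $\mathcal{A}(G_m,k)$ and the set of white-or-crossing colourings are interchanged at precisely the vertices in $A$. This can be done by a finite case check on the four possible local pictures at $v_e$ in Table~\ref{tablecontractrg} (non-loop, orientable loop, non-orientable loop) against the four vertex types of Figure~\ref{f.kval}; after that local lemma is in place, the rest of the argument assembles cleanly.
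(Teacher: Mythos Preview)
Your proposal is correct and follows essentially the same route as the paper's treatment. The paper does not prove Theorem~\ref{n3} directly (it is cited from \cite{EMMb}); instead it proves the signed generalisation Theorem~\ref{c41.n5} for arbitrary embedded graphs and then, in the subsequent remark, recovers Theorem~\ref{n3} by observing that for plane $G$ the terms with $|A|$ odd vanish because $(G^{\tau(A)})^*$ then has a loop. Your argument is the natural direct specialisation: you start from Jaeger's unsigned count (Evaluation~\ref{th.addval}) rather than the signed Evaluation~\ref{sumcross}, and you phrase the key bijection as ``admissible valuations of $G_m$ with crossing set $A$ $\leftrightarrow$ all-white valuations of $(G^{\tau(A)})_m$'' via a local white/crossing swap, whereas the paper phrases the same bijection as ``$\leftrightarrow$ proper boundary $k$-colourings of $G^{\tau(A)}$''. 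These are two descriptions of the same map (the monochromatic cycles of an admissible valuation with crossing set $A$ are precisely the boundary components of $G^{\tau(A)}$), so the core combinatorics coincide; your version is a clean direct proof for the plane case, while the paper's detour through the signed identity buys the extension to all embedded graphs.
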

We will now complete Theorems \ref{t.aigner} and \ref{n3} by showing that that Penrose polynomial of an arbitrary embedded graph can be written as sum of chromatic polynomials. This result is a consequence of the combinatorial interpretation of $P(G;k)$ in terms of $k$-valuations from Evaluation~\ref{sumcross}, and hence it follows from Theorem~\ref{t.qk}.
\begin{theorem}\label{c41.n5}
Let $G$ be an embedded graph. Then
\[  P(G;\lambda) = \sum_{A\subseteq E(G)}  (-1)^{ |A|}  \chi ((   G^{\tau(A)}   )^*   ;\lambda)  .\]
\end{theorem}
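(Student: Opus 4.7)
My plan is to derive the identity directly from the combinatorial interpretation of the Penrose polynomial given in Evaluation~\ref{sumcross}, namely
\[P(G;k) = \sum_{\phi \in \mathcal{A}(G_m,k)} (-1)^{\cross(\phi)}.\]
Since both sides of the claimed identity are polynomials in $\lambda$, it suffices to prove equality for all $k \in \mathbb{N}$.

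First, I would stratify the admissible $k$-valuations according to the locations of their crossings. For each $A \subseteq E(G)$, let $\mathcal{A}_A(G_m,k)$ denote the set of admissible $k$-valuations $\phi$ of $G_m$ such that $(\phi,v_e)$ is of crossing type whenever $e \in A$ and of white type whenever $e \notin A$. Since $\cross(\phi)=|A|$ for every $\phi \in \mathcal{A}_A(G_m,k)$, grouping terms gives
\[P(G;k) = \sum_{A \subseteq E(G)} (-1)^{|A|}\, |\mathcal{A}_A(G_m,k)|.\]
The heart of the proof is then to identify $|\mathcal{A}_A(G_m,k)|$ with $\chi((G^{\tau(A)})^*;k)$.

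To do this, I would associate to each $A$ the graph state $s_A \in \mathcal{S}(G_m)$ obtained by taking the white smoothing at $v_e$ for $e \notin A$ and the crossing at $v_e$ for $e \in A$. The key topological fact I would invoke (and briefly justify via the arrow-presentation / ribbon-graph description of partial Petrials from Section~\ref{ss.dual}) is that the closed curves of $s_A$ are precisely the boundary components of $G^{\tau(A)}$, of which there are $f(G^{\tau(A)})$. Consequently, the curves of $s_A$ correspond bijectively with the vertices of $(G^{\tau(A)})^*$, and two curves meet at $v_e$ if and only if the corresponding faces of $G^{\tau(A)}$ share the edge $e$, equivalently, the corresponding vertices of $(G^{\tau(A)})^*$ are joined by the edge dual to~$e$.

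Given this identification, an element of $\mathcal{A}_A(G_m,k)$ amounts to a colouring assigning one of $k$ colours to each curve of $s_A$: the two pairs of opposite arcs at each $v_e$ carry the same colour in both the white type and the crossing type, so each curve inherits a single colour. Admissibility forbids the total type, so at every $v_e$ the two colours on the two arc-pairs must be distinct; under the bijection above this is exactly the condition that the colouring is a proper $k$-colouring of $(G^{\tau(A)})^*$. Thus $|\mathcal{A}_A(G_m,k)| = \chi((G^{\tau(A)})^*;k)$, and substituting back yields the formula.

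The main obstacle is the topological identification of the closed curves of $s_A$ with the faces of $G^{\tau(A)}$; this is where the partial Petrie twist does the work of converting the sum over crossings in the medial graph into a sum over face-colourings of twisted duals. A minor technical point to check is the degenerate case when a face of $G^{\tau(A)}$ is adjacent to itself along an edge: then $(G^{\tau(A)})^*$ has a loop and $\chi((G^{\tau(A)})^*;k) = 0$, which correctly matches the fact that no admissible $k$-valuation can assign two distinct colours to a single curve.
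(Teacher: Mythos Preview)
Your proposal is correct and follows essentially the same route as the paper: both start from Evaluation~\ref{sumcross}, stratify the admissible $k$-valuations by the set $A$ of crossing vertices, and then identify $\mathcal{A}_A(G_m,k)$ with proper vertex $k$-colourings of $(G^{\tau(A)})^*$ via the correspondence between the curves of the associated state and the boundary components (equivalently faces) of $G^{\tau(A)}$. The only cosmetic difference is that the paper phrases the intermediate step using the language of ``proper boundary $k$-colourings'' of the ribbon graph $G^{\tau(A)}$, whereas you work directly with the graph state $s_A$; the content is the same.
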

\begin{proof}
From Evaluation~\ref{sumcross}, for each $k \in \mathbb{N}$ we have
 \begin{equation*}\label{c41.n5.e1}
 P(G;k) = \sum\limits_{\phi\in \mathcal{A} (G_m,k)}
 (-1)^{\cross(\phi)}
 =\sum\limits_{X\subseteq V(G_m)}  (-1)^{|X|} \sum\limits_{
\substack{\phi\in \mathcal{A} (G_m,k) \\  (\phi,v)\text{ crossing} \iff v\in X}}  1.
\end{equation*}

 We now view $G$ as a ribbon graph  and construct a bijection from the set of admissible $k$-valuations of the medial graph $G_m$ to a certain set of colourings of the boundaries of the partial Petrials  of $G$.
 Let $A_{\phi}\subseteq E(G)$ be the set of edges corresponding to vertices of $G_m$ with crossings in the admissible $k$-valuation $\phi$.
  The cycles in $G_m$ (which are determined by the colours  in the $k$-valuation $\phi$) follow exactly the boundary components of the partial Petrial $G^{\tau(A_{\phi})}$. Moreover, the colours of the  cycles in the $k$-valuation induce a colouring of the boundary components of $G^{\tau(A_{\phi})}$.   We define a {\em proper boundary $k$-colouring } of a ribbon graph to be a map from its set of boundary components to the colours $ \{1,2,\ldots , k\}$ with the property that whenever  two boundary components share a common edge, they are assigned different colours. It is then clear that the map from $\phi$ to $G^{\tau(A_{\phi})}$ defines a bijection between the set of admissible $k$-valuations of $G_m$ and the set of proper boundary \mbox{$k$-colourings} of the partial Petrials of $G$. Moreover, this bijection identifies the vertices with crossing states with the edges in  $A_{\phi}$. Thus we have
  \begin{equation}\label{c41.n5.e2}
 \sum\limits_{X\subseteq V(G_m)}  (-1)^{|X|} \sum\limits_{
\substack{\phi\in \mathcal{A} (G_m,k) \\  (\phi,v)\text{ crossing} \iff v\in X}}  1
=
 \sum_{A\subseteq E(G)}(-1)^{|A|} \sum\limits_{
\substack{\text{proper boundary} \\ \text{ $k$-col. of  } G^{\tau(A)}}}  1.
\end{equation}
 A proper boundary $k$-colouring of $G^{\tau(A)}$  corresponds to a proper face $k$-colouring of $G^{\tau(A)}$,  and hence to a proper $k$-colouring of $(G^{\tau(A)})^*$.
 Since the chromatic polynomial counts proper $k$-colourings of a graph, we can then write \eqref{c41.n5.e2} as
\[
 \sum_{A\subseteq E(G)}(-1)^{|A|} \sum\limits_{
\substack{\text{proper face} \\ \text{ $k$-col. of } G^{\tau(A)}}}  1
=
 \sum_{A\subseteq E(G)}(-1)^{|A|} \sum\limits_{
\substack{\text{proper $k$-col.} \\ \text{of } (G^{\tau(A)})^*  }}  1
=
 \sum_{A\subseteq E(G)}  (-1)^{ |A|}  \chi ((   G^{\tau(A)}   )^*   ;k) .
\]
 The theorem then follows upon observing that $k$ is an arbitrary natural number.
\end{proof}

\begin{remark}
Theorem~\ref{n3} can be recovered from  Theorem~\ref{c41.n5} in the case of plane graphs, as,  if $|A|$ is odd, then $(G^{\tau(A)})^*  $ must contain a loop. To see why, draw $G$ on the plane, and use this to obtain a plane drawing of the  boundary cycles of $G^{\tau (A)}$ in the natural way. The Jordan Curve Theorem implies that two boundary cycles of $G^{\tau(A)}$  must cross each other an even number of times in this drawing. Thus, if $|A|$ is odd, some boundary cycle must cross itself, and this corresponds to a loop in $(G^{\tau(A)})^*  $.
\end{remark}

\subsection{Extending results to the ribbon graph polynomial}\label{ss.argp}

We have seen that the connection between $P(G^*)$ and    $R(G^{\times})$  of Theorem~\ref{PtoZ} can be used to reveal new properties of the Penrose polynomial.
We now use it to uncover properties of $R(G)$, beginning with a new formulation of the Four Colour Theorem.

\begin{theorem}
The following statements are equivalent:
\begin{enumerate}
\item  \label{fct1} the Four Colour Theorem is true;
\item\label{fct2}  for every connected, loopless plane graph $G$,  $(-1)^{v(G)}R(G^{\times} ; -2,-3,1/3)<0$;
\item\label{fct3} for every connected, loopless plane graph $G$,  $(-1)^{v(G)}R(G^{\times}; -3,-4,1/4)<0$.
\item \label{fct4}for every connected, loopless plane graph $G$,  $R(G^{\times} ; 3,2, -1/2)\neq 0$.
\end{enumerate}
\end{theorem}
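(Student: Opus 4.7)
The plan is to translate each of the three $R$-evaluations into a condition on the Penrose polynomial of $G^{*}$ via Theorem~\ref{PtoZ}, and then to close the circle of equivalences with the Four Colour Theorem using Aigner's inequality (Theorem~\ref{t.aigner}) and the classical Penrose--Tait identity~\eqref{e.ptcol}.

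First I would carry out the translation. For any connected plane graph $G$ we have $c(G^{\times})=c(G)=1$ and $r(G^{\times})=v(G)-1$, so Theorem~\ref{PtoZ} reduces to
\[
 P(G^{*};\lambda) \;=\; (-1)^{v(G)-1}\,\lambda\,R\!\left(G^{\times};\,1-\lambda,\,-\lambda,\,1/\lambda\right).
\]
Specialising to $\lambda = 3$, $\lambda = 4$, $\lambda = -2$ shows, respectively, that conditions (2), (3), (4) are equivalent as universal statements over connected loopless plane $G$ to
$P(G^{*};3)>0$, $\;P(G^{*};4)>0$, and $\;P(G^{*};-2)\neq 0$.

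Second, I would invoke Birkhoff's reduction of FCT to plane triangulations together with \eqref{e.ptcol}. For any plane triangulation $T$, the dual $T^{*}$ is plane, cubic, bridgeless, and connected, so \eqref{e.ptcol} gives
\[
 P(T^{*};3) \;=\; \#\{\text{proper edge 3-colourings of }T^{*}\} \;=\; (-1/4)^{v(T^{*})/2}\,P(T^{*};-2),
\]
and by Tait's theorem this common quantity is nonzero precisely when $T$ is $4$-face-colourable, that is, when $\chi(T;4)>0$. Thus, restricted to plane triangulations, both $P(G^{*};3)>0$ and $P(G^{*};-2)\neq 0$ are each equivalent to $\chi(G;4)>0$, which by Birkhoff is FCT. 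For $\lambda = 4$, Aigner's inequality applied to the plane graph $G^{*}$ gives $\chi(G;4)=\chi((G^{*})^{*};4)\le P(G^{*};4)$, so FCT~$\Rightarrow$~(3) is immediate, while the converse follows by restricting to triangulations and chaining through the $\lambda=3$ equivalence already established.

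Finally, to obtain the equivalences in the universal form stated in the theorem, the implications (2), (3), (4) $\Rightarrow$ FCT come directly from specialising each universal statement to plane triangulations and citing the preceding step. For the reverse directions FCT~$\Rightarrow$~(2) and FCT~$\Rightarrow$~(4) with arbitrary connected loopless plane $G$, I would use Theorem~\ref{n3}, which for plane $G^{*}$ writes
\[
 P(G^{*};k) \;=\; \sum_{A\subseteq E(G^{*})}\chi\!\left((G^{*\tau(A)})^{*};k\right),
\]
a nonnegative sum of chromatic polynomials when $k\in\mathbb{N}$, and argue that under FCT at least one summand is positive. The main obstacle is precisely this last step: securing positivity of the sum for every connected loopless plane $G$ (not only those whose dual is cubic), and extending the $\lambda=3$ versus $\lambda=-2$ dichotomy beyond the plane cubic setting of \eqref{e.ptcol}. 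I expect to handle it by passing from $G$ to a plane triangulation refinement and then transferring the positivity obtained there back through the Penrose expansion, with the sign-issue at $\lambda = -2$ being the most delicate point.
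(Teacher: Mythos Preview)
Your translation via Theorem~\ref{PtoZ} is exactly what the paper does, and it is correct: conditions (2), (3), (4) are equivalent, as universal statements over connected loopless plane $G$, to $P(G^{*};3)>0$, $P(G^{*};4)>0$, and $P(G^{*};-2)\neq 0$ respectively.

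The gap is in the second half. You only secure the Penrose conditions on triangulations (equivalently, on $G$ with $G^{*}$ cubic), because \eqref{e.ptcol} is stated only for plane cubic connected graphs. You acknowledge this yourself (``the main obstacle is precisely this last step''), and your proposed remedy---passing to a triangulation refinement and transferring positivity back via Theorem~\ref{n3}---is not a proof but a hope. In particular, the ``chaining through the $\lambda=3$ equivalence'' that you invoke for (3)$\Rightarrow$FCT does not close: knowing $P(T^{*};4)>0$ for a triangulation $T$ gives you no handle on $P(T^{*};3)$ or on $\chi(T;4)$ without further input.

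The paper avoids all of this by citing stronger off-the-shelf results. Aigner proves in \cite{Ai97} (Corollary~9) that FCT is equivalent to $P(G;3)>0$ for \emph{every} connected bridgeless plane graph $G$, and likewise that FCT is equivalent to $P(G;4)>0$ for every such $G$; these are not restricted to cubic $G$. Dualising, these give (1)$\iff$(2) and (1)$\iff$(3) immediately. For (1)$\iff$(4), Jaeger \cite{Ja88} shows that for \emph{any} plane connected $G$ (again not just cubic), $(-1)^{e(G)}(-1/2)^{v(G)-2}P(G;-2)$ equals the number of face $4$-colourings of $G$; hence $P(G^{*};-2)\neq 0$ for all connected loopless plane $G$ is equivalent to FCT. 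So the missing ingredient in your argument is precisely these two citations; once you have them, the refinement-and-transfer programme is unnecessary.
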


\begin{proof}
Corollary~9 of \cite{Ai97} states  that the Four Colour Theorem is equivalent to $P(G;3)>0$ for all connected, bridgeless plane graphs $G$.  This is equivalent to $P(G^*;3)>0$ for all connected, loopless plane graphs $G$, which, by Theorem~\ref{PtoZ}, is equivalent to the inequality
$
(-1)^{r(G^{\times})}3^{c(G^{\times})} R(G^{\times} ; -2,-3,1/3)>0,
$
or, since $G$ is connected and has the same rank and number of vertices as $G^{\times}$,  this is equivalent to $(-1)^{v(G)}R(G^{\times} ; -2,-3,1/3, 1)<0$, which yields the equivalence between items \eqref{fct1} and \eqref{fct2} of the theorem.

The equivalence between items \eqref{fct1} and \eqref{fct3}  is shown in a similar way but using the fact from \cite{Ai97} that the Four Colour Theorem is equivalent to showing that $P(G;4)>0$ for all connected, bridgeless plane graphs $G$.

The equivalence between items \eqref{fct1} and \eqref{fct4} follows from \cite{Ja88}, Proposition~7, which says that if $G$ is plane and connected, then
$(-1)^{e(G)}(-1/2)^{v(G)-2} P(G;-2)$ is equal to the number of face 4-colourings of $G$.
\end{proof}

Another way to recover new results relies on the  partial duals of Chmutov \cite{Ch1} and  the more general twisted duals of  \cite{EMMa}. As we only use twisted duality explicitly in the proof of Corollary~\ref{t.interp2a}, to avoid a lengthy discussion we refer the reader to  \cite{EMMa} or  \cite{EMMb} for the relevant definitions and properties.
\begin{corollary}\label{t.interp2a}
Let $G$ be an embedded graph, and  $G^{\delta(A)}$ be its partial dual with respect to $A$.
\begin{enumerate}
\item \label{ttp3} If $G$ is an embedded graph,  then  \[(-1)^{r(G)}\lambda^{c(G)}  R(G;1-\lambda,- \lambda, 1/\lambda, 1)=    \sum_{A\subseteq E(G)}  (-1)^{|A|} \chi ((   G^{\delta(A)}   )^{\times} ;\lambda).\]

\item \label{ttp2} If $G$ is a plane graph, then
\[ (-1)^{r(G)}(1-x)^{c(G)} T(G;x,x)=     \sum_{A\subseteq E(G)}  (-1)^{|A|} \chi ((   G^{\delta(A)}   )^{\times} ;1-x).\]

\end{enumerate}
\end{corollary}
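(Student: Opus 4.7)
The plan is to derive part (1) by chaining Theorem~\ref{PtoZ} with Theorem~\ref{c41.n5}, using the twisted duality formalism of \cite{EMMa} to identify the resulting ``dual-of-partial-Petrial-of-dual'' with $(G^{\delta(A)})^{\times}$; part (2) then falls out as an immediate specialisation of part (1).

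First I would apply Theorem~\ref{PtoZ} with $G$ replaced by $G^{\times}$. Since the Petrial preserves the underlying vertex and edge sets, $c(G^{\times}) = c(G)$ and $r(G^{\times}) = r(G)$, and since $(G^{\times})^{\times} = G$, the theorem yields
\[
  (-1)^{r(G)} \lambda^{c(G)} R(G; 1-\lambda, -\lambda, 1/\lambda) = P((G^{\times})^*; \lambda).
\]
Theorem~\ref{c41.n5} then expands the right-hand side (over the edge set of $(G^{\times})^*$, which is naturally identified with $E(G)$) as
\[
  P((G^{\times})^*; \lambda) = \sum_{A \subseteq E(G)} (-1)^{|A|} \chi\bigl( ( ((G^{\times})^*)^{\tau(A)} )^* ; \lambda \bigr).
\]

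The crux, and what I expect to be the main obstacle, is the twisted duality identity
\[
  (((G^{\times})^*)^{\tau(A)})^* = (G^{\delta(A)})^{\times}.
\]
This can be checked edge-by-edge using the local operations on arrow presentations from \cite{EMMa}. At each edge $e$, both sides apply a composition of the local operations $\tau_e$ and $\delta_e$; reading compositions left-to-right, the left-hand side applies $\tau_e \delta_e \tau_e \delta_e$ when $e \in A$ and $\tau_e \delta_e \delta_e = \tau_e$ when $e \notin A$, whereas the right-hand side applies $\delta_e \tau_e$ when $e \in A$ and $\tau_e$ when $e \notin A$. The two agree once one invokes the braid relation $\tau_e \delta_e \tau_e = \delta_e \tau_e \delta_e$, which is precisely the $S_3$-relation generating twisted duality in \cite{EMMa}; the remainder is routine bookkeeping.

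For part (2), I would specialise $\lambda = 1-x$ in part (1). Since $G$ is plane, Equation~\eqref{TR1b} gives $T(G;x,y) = R(G;x,y-1,z)$ for any $z$; with $y = x$ and $z = 1/(1-x)$ this reads $T(G;x,x) = R(G; x, x-1, 1/(1-x))$, which under $\lambda = 1 - x$ is exactly $R(G; 1-\lambda, -\lambda, 1/\lambda)$. Multiplying through by $(-1)^{r(G)}(1-x)^{c(G)} = (-1)^{r(G)}\lambda^{c(G)}$ and substituting into part (1) then yields part (2) at once.
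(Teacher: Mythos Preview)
Your proposal is correct and follows essentially the same route as the paper: apply Theorem~\ref{PtoZ} with $G$ replaced by $G^{\times}$, expand $P((G^{\times})^*;\lambda)$ as a sum of chromatic polynomials, and then simplify the exponent $\tau(E)\delta(E)\tau(A)\delta(E)$ edge-by-edge using the $S_3$ relations to obtain $(G^{\delta(A)})^{\times}$. In fact your citation of Theorem~\ref{c41.n5} is the correct one here---the paper's reference to Theorem~\ref{n3} appears to be a slip, since that result is stated only for plane graphs and lacks the $(-1)^{|A|}$ factor that both the statement and your argument require.
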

\begin{proof}
For Item \ref{ttp3}, observe that Theorems~\ref{PtoZ} and \ref{n3} give
\[ (-1)^{r(G)}\lambda^{c(G)}  R(G;1-\lambda,- \lambda, 1/\lambda)=  P((G^{\times} )^*; \la) = \sum_{A\subseteq E(G)}  \chi ((   (G^{\times} )^*)^{\tau (A)})^*  ;\lambda).\]
Rewriting the exponent, with $A^c:=E(G) \backslash A$, and using group relations for $\tau$ and $\delta$ from \cite{EMMb}, gives
\begin{equation*}
\delta\tau(E(G)) \tau(A)  \delta (E(G)) =
\delta \tau \delta \tau (A) \delta \delta \tau (A^c)=
\tau \delta(A) \tau(A^c)=
(G^{\delta(A)})^{\times},
  \end{equation*}
  and the result follows.

  Item \ref{ttp2} follows from Item \ref{ttp3} and Equation~\eqref{TR1b}.
  \end{proof}
The following  result is another  consequence of  Theorem~\ref{PtoZ}.

\begin{corollary}\label{t.interp2b} ~
Let $G$ be plane, connected, and cubic, and let $H= (G^*)^{\times}$.  Then
the number of edge $3$-colourings of $G$ is
\[ (-1)^{v(H)-1} 3 R(H;-2,-3,1/3) =  R(H;3,2,-1/2). \]
\end{corollary}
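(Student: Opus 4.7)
My plan is to derive both equalities from a single application of Theorem~\ref{PtoZ} to the graph $G^{*}$. The key observation is that Theorem~\ref{PtoZ} expresses $P(G^{*};\lambda)$ in terms of $R(G^{\times};\cdot)$, while the corollary requires $R(H;\cdot)$ with $H=(G^{*})^{\times}$; substituting $G\to G^{*}$ in Theorem~\ref{PtoZ} therefore produces the master identity
\[
P(G;\lambda)\;=\;\lambda^{c(H)}(-1)^{r(H)}R(H;\,1-\lambda,\,-\lambda,\,1/\lambda),
\]
since $(G^{*})^{*}=G$ and $(G^{*})^{\times}=H$. Because both geometric duality and Petrie duality preserve connectivity, connectedness of $G$ forces $H$ to be connected, so $c(H)=1$ and $r(H)=v(H)-1$, and the master identity simplifies to $P(G;\lambda)=\lambda(-1)^{v(H)-1}R(H;1-\lambda,-\lambda,1/\lambda)$.

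For the first equality, I would evaluate the simplified master identity at $\lambda=3$ and invoke the first half of Equation~\eqref{e.ptcol}, which identifies $P(G;3)$ with the number of edge $3$-colourings of $G$. This yields directly that the number of edge $3$-colourings equals $3(-1)^{v(H)-1}R(H;-2,-3,1/3)$. For the second equality, I would evaluate the master identity at $\lambda=-2$ to obtain $P(G;-2)=2(-1)^{v(H)}R(H;3,2,-1/2)$, and then combine with the second half of Equation~\eqref{e.ptcol}, namely $\#\{\text{edge $3$-colourings of }G\}=(-1/4)^{v(G)/2}P(G;-2)$. Collapsing the resulting scalar coefficient would use the identity $v(H)=v((G^{*})^{\times})=v(G^{*})=f(G)$, together with the Euler/degree relation $f(G)=v(G)/2+2$ valid for plane cubic connected $G$ (which follows from $v(G)-e(G)+f(G)=2$ combined with $2e(G)=3v(G)$).

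The main obstacle is the careful bookkeeping of the signs and powers of $2$ in the prefactor $(-1/4)^{v(G)/2}\cdot 2\cdot(-1)^{v(H)}$ for the second equality; the first equality falls out almost immediately from the master identity and \eqref{e.ptcol}, but the second requires verifying that this prefactor reduces to the value demanded by the corollary. All the parity information needed to execute that reduction comes from the plane cubic hypothesis, which forces $v(G)$ to be even and pins down $v(H)$ modulo $2$ via the relation $v(H)=v(G)/2+2$.
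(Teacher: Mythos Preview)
Your proposal is correct and follows essentially the same approach as the paper. Both arguments substitute $G\mapsto G^{*}$ in Theorem~\ref{PtoZ} to obtain the master identity $P(G;\lambda)=\lambda^{c(H)}(-1)^{r(H)}R(H;1-\lambda,-\lambda,1/\lambda)$, evaluate at $\lambda=3$ and $\lambda=-2$ via Equation~\eqref{e.ptcol}, and handle the sign in the second expression through the identities $v(H)=f(G)$ and $f(G)=v(G)/2+2$ (from Euler's formula and $2e(G)=3v(G)$), yielding $v(G)/2+v(H)\equiv 0\pmod 2$.
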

\begin{proof}
The result follows  easily from Theorem~\ref{PtoZ} and Equation \eqref{e.ptcol}.   For recovering the second expression observe that
$ v(G)/2 +v(H) = v(G)/2 +f(G) $.  As $G$ is cubic, $2e(G) = 3  v(G)$, so Euler's formula gives $f(G)  = 2 + v(G)/2$, and so  $v(G)/2 +f(G) = 2+v(G) \equiv 0 \mod 2 $.
  \end{proof}

\begin{remark}
Corollary~\ref{t.interp2a} and its proof relied upon twisted duality properties from \cite{EMMa}. Twisted duality, in fact, forms the foundation of most of the results in this paper. This is since Proposition~\ref{e.qdc}, and Equations \eqref{e.Qdual1} and \eqref{e.Qdual2}, which appear implicitly or explicitly in most of the proofs here, depend upon twisted duality.
\end{remark}

\section{Relating  graph polynomials through tensor products}\label{s.tp}

In   Sections  \ref{ss.dpn} and \ref{ss.argp} we obtained new combinatorial identities for $P(G)$ and $R(G)$. Since these results relied upon Theorem~\ref{PtoZ}, the evaluations for $R(G)$ were all restricted to the specialisation $R(G;  1-x, -x, 1/x)$.  In this section we outline a family of connections between the polynomials $P$ and $R$ that hold for other 1-variable specialisations of $R(G)$, and we describe how these provide new combinatorial interpretations of $R(G)$.   These connections between $P$ and $R$  arise  from a formula for $Q(G)$ for the tensor product of graphs.   There are various constructions that go by the name of tensor product in the literature.  Here we follow the terminology established by Brylawski in \cite{Bry82} in the context of identities for the classical Tutte polynomial, where the tensor product of two graphs essentially replaces each edge of one graph by the other graph.  This tensor product construction has been generalised to ribbon graphs in \cite{HM}. 


We start by illustrating the general principle behind our new identities with a special case. Let $G$ be a cellularly  embedded graph, and let  $D(G)$  be the cellularly embedded graph obtained from $G$ by doubling all the edges, that is, for each edge $e$ of $G$, by embedding a new edge parallel to $e$ so that the two edges bound a face of degree $2$. By comparing the calculations of the transition polynomial at an edge of $G$, and the corresponding pair of edges in $D(G)$, we see that
\begin{equation}\label{e.dbl1}
Q(G;  (\alpha^2t+2\alpha(\beta +\gamma) , \beta^2+\gamma^2, 2\beta \gamma  ) ,t)  = Q(D(G);  (\alpha
 , \beta, \gamma  ) ,t).
 \end{equation}
Setting $(\alpha
 , \beta, \gamma  )=(1,0,-1) $ and using  Proposition~\ref{p.qmbr} then yields that
\begin{equation}\label{e.dbl2} t^{c(G)}  (t-2)^{r(G)}   R(G;  (2(t-1)/(t-2), t(t-2), 1/t)  =
P(D(G), t).
 \end{equation}
Thus, we have a new connection between $R$ and $P$.  Combining it with previous evaluations of the Penrose polynomial gives new interpretations of $R(G)$. For example,
\[ k^{c(G)}  (k-2)^{r(G)}  R(G; 2(k-1)/(k-2),  k(k-2) , 1/k) =    \sum\limits_{  \phi \in \mathcal{A}(D(G)_m,k)  }   (-1)^{\cross(\phi)}  ;\]
\[\lambda^{c(G)}  (\lambda-2)^{r(G)}  R(G; 2(\lambda-1)/(\lambda-2),  \lambda(\lambda-2) , 1/\lambda) =   \sum\limits_{A\subseteq E(D(G))}  (-1)^{ |A|}  \chi (   (D(G)^{\tau(A)}   )^*   ;\lambda) ;\]
and  if $G^{*\times}$ is orientable
\[k^{c(G)}  (k-2)^{r(G)}  R(G; 2(k-1)/(k-2),  k(k-2) , 1/k )=  (-1)^{f(G)}  \sum\limits_{  \phi \in \mathcal{P}(D(G)_m,k)  }   2^{\total(\phi)}.    \]
Note that by Equation~\eqref{TR1b}, these evaluations  give identities for the Tutte polynomial of a plane graph.

We will now fully generalise this construction and process.  Instead of just doubling edges, we will be able to replace the edges of $G$ by any graph, using the tensor operation for embedded graphs from \cite{HM}.  We will then use the same process of examining terms and appropriately adjusting the weights to find a general formula for the transition polynomial of a tensor product.  Finally, this formula, combined with the previous relations among the various polynomials, will give the desired family of new connections between $P$ and $R$ and hence new evaluations.

Just as with abstract graphs, the intuitive idea of a tensor product $G\otimes_{\varphi} H$ of two embedded graphs $G$ and $H$  is to replace each edge of $G$ with a copy of $H-e$.  We will define tensor products in terms of ribbon graphs. It is an easy exercise to reformulate this definition in terms of cellularly embedded graphs or arrow presentations.
\begin{definition}\label{d.tp1}
Let  $G$ and $H$  be ribbon graphs and $e$ be a distinguished non-loop  edge of $H$.
Let $\{\beta^1, \beta^2\} = \{ e\} \cap V(H)$ (so each $\beta^i$ is an arc where the edge $e$ is attached to its incident vertices). Similarly, for each $f\in E(G)$, let
 $\{\alpha_f^1, \alpha_f^2\} = \{ f\} \cap V(G)$.
Furthermore, let  $\varphi:=\{ \varphi_f \}_{f\in E(G)}$
 be a family of homeomorphisms such that for each $i$ and $f$ we have $\varphi_f: \{f\}\rightarrow \{e\}$ and $\varphi_f(\alpha_f^i) = \beta^j$ where $j\in \{1,2\}$ (since $\varphi$ is a homeomorphism, this means that $\varphi_f(\alpha_f^1)\neq \varphi_f(\alpha_f^2)$).
 We call $\varphi$ the family of {\em identifying maps}.
Then the {\em tensor product},  $G\otimes_{\varphi} H$, is the ribbon graph formed as follows:
\begin{enumerate}
\item  For each $f\in E(G)$, take a copy of $H$ and identify  $f$ and $e$  using $\varphi_f$.
\item In the resulting complex, for each $f$, delete the image $\varphi(f)=e$ of the identified edges except for the arcs $\varphi_f(\alpha_f^i) = \beta^i$.   Note that, for $i=1,2$   this merges two vertex discs, one from $G$ and one from $H$, that intersect in the  arc $\varphi_f(\alpha_f^i ) = \beta^j$ to form a single vertex.

\end{enumerate}
\end{definition}
 An example of a tensor product of plane graphs is given in Figure~\ref{f.tens1}. As another example, $G\otimes C_3$ is the graph obtained by subdividing each edge of $G$; and $D(G)$ is $G\otimes \theta$, where $\theta$ is the plane graph consisting of two vertices joined by three parallel edges.  Figure~\ref{c1.loops} shows $G$ locally at $f$ and the corresponding location in $G\otimes_{\varphi} H$, in which $f$ is ``replaced'' by a copy of $H-e$.  Observe that up to equivalence of ribbon graphs, for each $f\in E(G)$, there are four possible ways to  replace $f$ with a  copy of $H-e$.

\begin{figure}
\centering
\subfigure[A plane graph $G$. ]{
\quad\quad \includegraphics[scale=0.6]{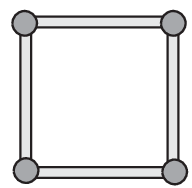}\quad\quad
\label{f.tens1a}
}
\hspace{1cm}
\subfigure[A plane graph $H$.]{
\labellist \small\hair 2pt
\pinlabel {$e$}  at 1 56
\endlabellist
\quad\quad\includegraphics[scale=0.6]{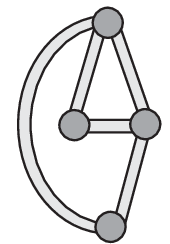}\quad\quad
\label{f.tens1b}}
\hspace{1cm}
\subfigure[A tensor product \mbox{$G\otimes_{\varphi} H$}.]{
\quad\quad\includegraphics[scale=0.6]{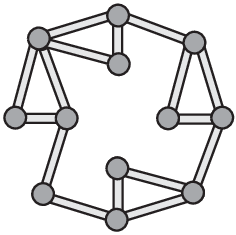}\quad\quad
\label{f.tens1c}
}
\caption{An example of a tensor product $G\otimes_{\varphi} H$. The map $\varphi$ determines how the edge $e$ of $H$ is identified with each edges of $G$. }
\label{f.tens1}
\end{figure}

Identities for tensor products of graphs, also called parallel connections,  have been given for various forms of the Tutte polynomial, for example in  \cite{Bry82,HM,OW92,Tra05,Woo02}.  Our central result  gives the corresponding formula for the transition polynomial in Theorem \ref{t.tensor}.

\begin{theorem}\label{t.tensor}
Let $G$ be an embedded graph, $H$ be an embedded  graph with a distinguished non-loop  edge $e$, and $G\otimes_{\varphi} H$ be their tensor product determined by the identifying maps $\varphi$. Then, for $t \in \mathbb{C} \backslash \{0, 1, -2\} $,
\[
Q(G\otimes_{\varphi} H; (\alpha, \beta, \gamma), t  )   =  Q(G; (\kappa, \lambda, \mu), t  ),
\]
where $\kappa$, $\lambda$, and $\mu$ are the unique solutions to
\begin{align}
\label{t.tensor.e1} t \kappa+t^2 \lambda+t \mu&= Q( H-e;   (\alpha, \beta, \gamma), t  ) \\
\label{t.tensor.e2} t^2 \kappa+t \lambda+t \mu&= Q( H/e;   (\alpha, \beta, \gamma), t  ) \\
\label{t.tensor.e3} t \kappa +t \lambda+t^2 \mu&= Q( H^{\tau(e)}/e;   (\alpha, \beta, \gamma), t  ).
\end{align}
\end{theorem}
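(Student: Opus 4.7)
The plan is to prove the theorem by a direct state-sum analysis, isolating the contribution of each gadget copy of $H-e$ inside $G\otimes_{\varphi}H$, and then identifying the effective medial weights by testing three small base cases. No induction on $e(G)$ is really needed; the argument is one of factorisation.

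First I would observe that for each edge $f\in E(G)$, the copy $H_f$ of $H-e$ sits inside $G\otimes_{\varphi}H$ attached along the two arcs $\varphi_f(\alpha_f^i)=\beta^i$. A state $s$ of $(G\otimes_{\varphi}H)_m$ restricts, for each $f$, to a state $s_f$ of $(H_f)_m$, and the family $(s_f)_{f\in E(G)}$ determines $s$. The smoothings inside $H_f$ pair up the four arcs at $v_f$ (the vertex of $G_m$ corresponding to $f$) in one of three ways, giving an external connection pattern $p(s_f)\in\{W,B,C\}$ matching the three vertex-state types of Figure~\ref{c1.vstate.f2}.

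Second I would carry out a component-count decomposition: each closed curve of $s$ either lies entirely inside some $H_f$, or threads between gadgets via the external arcs. Letting $\tilde{s}$ denote the state of $G_m$ that assigns $v_f$ the smoothing corresponding to $p(s_f)$, the number of curves of the second type equals $c(\tilde{s})$. Setting
\[
Y_p \;:=\; \sum_{\substack{\sigma\in\mathcal{S}((H-e)_m)\\ p(\sigma)=p}} \omega(\sigma)\, t^{c^{\mathrm{int}}(\sigma)} \qquad (p\in\{W,B,C\}),
\]
where $c^{\mathrm{int}}(\sigma)$ counts only the closed curves entirely inside the gadget, a regrouping of the state sum yields
\[
Q(G\otimes_{\varphi}H;(\alpha,\beta,\gamma),t) \;=\; \sum_{\tilde{s}\in\mathcal{S}(G_m)}\Bigl(\prod_{f\in E(G)} Y_{p_f(\tilde{s})}\Bigr)\, t^{c(\tilde{s})} \;=\; Q(G;(Y_W,Y_B,Y_C),t).
\]

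Third, to identify $(Y_W,Y_B,Y_C)$ with $(\kappa,\lambda,\mu)$, I would test this identity on three graphs $G$ for which the tensor product can be read off directly from Definition~\ref{d.tp1}: for $G=K_2$ (a single non-loop edge), $G\otimes_{\varphi}H=H-e$; for $G$ a single orientable loop, both ends of the loop merge with the endpoints of $e$ in $H$, giving $G\otimes_{\varphi}H=H/e$; and for $G$ a single non-orientable loop, the analogous merging with a half-twist yields $G\otimes_{\varphi}H=H^{\tau(e)}/e$. Computing $Q(G;(Y_W,Y_B,Y_C),t)$ for each via Proposition~\ref{e.qdc} produces precisely the three equations \eqref{t.tensor.e1}--\eqref{t.tensor.e3}. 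Since the coefficient matrix has determinant $-t^3(t-1)^2(t+2)$, which is nonzero as a polynomial in $t$, the system admits a unique solution, forcing $(Y_W,Y_B,Y_C)=(\kappa,\lambda,\mu)$.

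The main obstacle is the component-count decomposition in the second step. One must verify that, once the external pattern $p(s_f)$ is fixed, the closed curves inside $H_f$ contribute a factor that depends only on $H$ and $p$, independently of the rest of the state, and that the remaining "external" curves are in natural bijection with the components of $\tilde{s}$. This reduces to a local analysis of how $(H_f)_m$ attaches to the ambient medial graph through the four arcs at $v_f$, checking that the three external connection patterns really match the white, black, and crossing vertex states, and hence that the regrouping is valid.
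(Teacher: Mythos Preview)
Your proposal is correct, and the factorisation in your second step is precisely what the paper does: it labels points $a_f,b_f,c_f,d_f$ where each gadget meets the rest of the medial graph, partitions the states of $(G\otimes_\varphi H)_m$ by how they pair these points, and obtains
\[
Q(G\otimes_\varphi H;(\alpha,\beta,\gamma),t)=Q\bigl(G;(t^{-1}Q_=(H-e),\,t^{-2}Q_{\shortparallel}(H-e),\,t^{-1}Q_{\times}(H-e)),t\bigr),
\]
which is exactly your $Q(G;(Y_W,Y_B,Y_C),t)$. The ``main obstacle'' you flag --- that the three external connection patterns genuinely correspond to the white, black, and crossing smoothings at $v_f$, independently of $\varphi_f$ --- is the same local analysis the paper carries out with the $a_f,b_f,c_f,d_f$ labelling; it is worth noting explicitly that the four possible disc homeomorphisms $\varphi_f$ induce only permutations in the Klein four-group on these labels, each of which fixes all three pairings setwise, so $Y_W,Y_B,Y_C$ are indeed intrinsic to $H$.

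Where you diverge from the paper is in your third step. The paper establishes the linear system \eqref{t.tensor.e1}--\eqref{t.tensor.e3} by introducing the analogous partitions $Q_=,Q_{\shortparallel},Q_{\times}$ of $Q(H/e)$ and $Q(H^{\tau(e)}/e)$ and checking six relations of the form $Q_=(H/e)=t\,Q_=(H-e)$ directly from how contraction and partial Petrial affect the cycle structure. Your approach --- specialise the already-established factorisation to $G=K_2$, to a single orientable loop, and to a single non-orientable loop, and invoke the identities $K_2\otimes_\varphi H=H-e$, $(\text{orientable loop})\otimes_\varphi H=H/e$, $(\text{non-orientable loop})\otimes_\varphi H=H^{\tau(e)}/e$ --- is a cleaner packaging that avoids the auxiliary notation. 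The two arguments are essentially dual: the paper reads off how the three operations on $H$ reshuffle the gadget cycle counts, while you read off the same information by inserting $H$ into the three one-edge graphs whose medial states realise those operations. Your route has the pleasant side effect of making the $\varphi$-independence in Corollary~6.2 visible before the system is even solved, but it does require verifying the three tensor-product identities for the test graphs from Definition~\ref{d.tp1}, which amounts to the same ribbon-graph bookkeeping as the paper's six relations.
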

\begin{proof}
We work in the language of ribbon graphs. We draw medial graphs by placing vertices in the centres of the edges of the original graph, and then drawing the edges of the medial graphs so that they pass through the endpoints of intersections between edge discs and vertex discs of the original graph, namely, the points $\{a_f,b_f,c_f,d_f\}$ shown in Figure \ref {c1.loops.a}. Likewise, we suppose that the cycles in the states of a medial graph also pass through these points. We choose an orientation for each edge $f$ of $G$, and arrange the labels $\{a_f,b_f,c_f,d_f\}$ so that the labeled points are met in the order $a_f,b_f,c_f,d_f$ when following the orientation of the edge, and furthermore, the points are labeled so that $a_f$ and $b_f$ are the end points of  one interval of intersection between $f$ and a vertex, while $c_f$ and $d_f$ are the end points of the other (see  Figure~\ref{c1.loops.a}).  This labelling of points in $G$ then induces a labelling of points on the boundaries of  $u_f$ and $v_f$  in  $G\otimes_{\varphi} H $ as indicated in  Figure~\ref{c1.loops.b}; and also a labelling of  points  on the vertices of  copies of $H$, $H-e$, $H/e$, and $H^{\tau(e)}/e$.

The proof is structured as follows.  We first show that the graphs states of $(G\otimes_{\varphi} H)_m$ may be viewed as graph states of $G_m$ with a number of additional cycles arising from the copies of $(H-e)_m$. The former correspond to cycles of $(G\otimes_{\varphi} H)_m$ that pass through the points   in $\{a_f,b_f,c_f,d_f \,|\, f\in E(G)\} $.  To enumerate the latter, at each edge $f$ of $G$ we consider the cycles of $(H-e)_m$ that do not pass through $\{a_f,b_f,c_f,d_f\}$.  These can be expressed in terms of $Q( H-e;   (\alpha, \beta, \gamma), t  )$, $Q( H/e;   (\alpha, \beta, \gamma), t  )$, and $Q( H^{\tau(e)}/e;   (\alpha, \beta, \gamma), t  )$, by solving for $\kappa, \lambda, \mu$ in the equations above.  This gives the appropriate weights to substitute into the computation of $Q(G)$, from which the result will follow.

\begin{figure}
\centering
\subfigure[An edge $f$ of $G$.]{
\labellist \small\hair 2pt
\pinlabel {$f$}  at   63 47
\pinlabel {$u_f$} at  20 47
\pinlabel {$v_f$}  at   106 47
\pinlabel {$a_f$}  at    52 92
\pinlabel {$b_f$}  at    52 5
\pinlabel {$c_f$}  at    76 5
\pinlabel {$d_f$}  at  76 92
\endlabellist
\includegraphics[scale=.80]{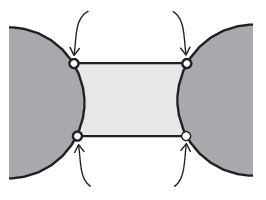}
\label{c1.loops.a}
}
\hspace{20mm}
\subfigure[The corresponding ribbon subgraph in $G\otimes_{\varphi} H$.]{
\labellist \small\hair 2pt
\pinlabel {$u_f$} at  20 60
\pinlabel {$v_f$}  at   151 60
\pinlabel {$a_f$}  at    50 104
\pinlabel {$b_f$}  at    50 17
\pinlabel {$c_f$}  at    121 17
\pinlabel {$d_f$}  at  121 104
\pinlabel {$\overbrace{\quad\quad\quad\quad\quad\quad\quad\quad\quad\quad\quad\quad\quad}^{H-e}$}  at  85 128
\endlabellist
\includegraphics[scale=.80]{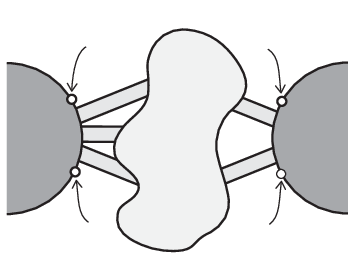}
\label{c1.loops.b}
}
\caption{A labelling used in the proof of Theorem~\ref{t.tensor}.}
\label{c1.loops}
\end{figure}

We begin by partitioning the graph states of $(G\otimes_{\varphi} H)_m$ according to how they pass through the set of points $\{a_f,b_f,c_f,d_f \,|\, f\in E(G)\} $. A cycle in a graph state of $(G\otimes_{\varphi} H)_m$ that passes through $a_f$ for some $f$ must enter the copy of $(H-e)_m$ and then exit it by passing through one of $b_f$, $c_f$ or  $d_f$, thus pairing $a_f$ with one of the other points.  The remaining two points are necessarily similarly paired in the graph state.  Thus, in any given state of $(G\otimes_{\varphi} H)_m$, for each edge $f$, one of the three situations occurs: $a_f$ is paired with $b_f$, and $c_f$ with  $d_f$; or $a_f$ is paired with   $c_f$, and $b_f$ with $d_f$; or  $a_f$ is paired with  $d_f$, and $b_f$ with  $c_f$.  We consider two graph states of $(G\otimes_{\varphi} H)_m$ to be in the same set of the partition if they have the same pairings for each $f$.   Let $\mathcal{X}$ denote this partition. Then
\begin{equation}\label{ep1}
Q(G\otimes_{\varphi} H; (\alpha, \beta, \gamma), t  )   =       \sum_{s\in \mathcal{S}( (G\otimes_{\varphi} H)_m  )} \omega(s)   t^{c(s)} =   \sum_{X\in \mathcal{X}}  \sum_{s\in X} \omega(s)   t^{c(s)} .
\end{equation}

Because each set $X$ is determined by pairings in $\{a_f,b_f,c_f,d_f \,|\, f\in E(G)\}$, and likewise each graph state of $G_m$ is determined by these same pairings, each set $X$ corresponds to a unique graph state of $G_m$.  The set of cycles in any graph state $s\in X$ can be partitioned into the set of cycles that pass through points in $\{a_f,b_f,c_f,d_f \,|\, f\in E(G)\} $, and the set cycles that do not.  The number of cycles in $s\in X$ that pass through some point in $\{a_f,b_f,c_f,d_f \,|\, f\in E(G)\}$ is exactly the number of cycles in this graph state of $G_m$ corresponding to $X$.  The number of cycles  in each $s\in X$ which do not pass through any of these points can be obtained by, for each $f\in E(G)$, considering the cycles in all the graph states of $(H-e)_m$ in which  $a_f,b_f,c_f,d_f$ are paired in the same way as in $s$, then counting all cycles except of those that pass though any of the points $a_f,b_f,c_f,d_f$ on $(H-e)_m$.

By these observations we have
\begin{equation}\label{ep2}
 \sum_{X\in \mathcal{X}}  \sum_{s\in X} \omega(s)   t^{c(s)}  =    \sum_{s\in \mathcal{S}( G)  }   t^{c(s)}   \prod_{f\in E(G)}    \sum_{\sigma \in \mathcal{ Y}_f(s)}  \omega(\sigma) t^{b(\sigma)} ,
\end{equation}
where $\mathcal{ Y}_f(s)$ is the set of graph states of $(H-e)_m$ in which  $a_f,b_f,c_f,d_f$ are paired in the same way as in $s$, and $b(\sigma)$ is the number of cycles in $\sigma$ that do not contain a point $a_f$, $b_f$, $c_f$, or $d_f$.

We now turn our attention to determining   $\sum_{\sigma \in \mathcal{ Y}_f(s)}  \omega(\sigma) t^{b(\sigma)}$. For this we split up the terms of the transition polynomial $Q (H-e)$ according to how the cycles pass through the points $a_f,b_f,c_f,d_f$. Let
\begin{itemize}
\item $Q_{=} (H-e)  $  denote the terms of $Q (H-e ;(\alpha, \beta, \gamma), t  )  $ which arise from the states of $(H-e)_m$ in which there is a cycle containing the points $a_f, b_f, c_f, d_f$ in that order;

\item $Q_{\shortparallel} (H-e)  $  denote the terms of $Q (H-e ;(\alpha, \beta, \gamma), t  )  $ which arise from the states of $(H-e)_m$ in which  there is a cycle containing the points $a_f, b_f$, and a different cycle containing the points  $c_f, d_f$;

\item $Q_{\times} (H-e)  $  denote the terms of $Q (H-e ;(\alpha, \beta, \gamma), t  )  $ which arise from the states of $(H-e)_m$ in which  there is a cycle containing the points $a_f, b_f, d_f, c_f$ in that order.
\end{itemize}
The partitioning of terms used in the definition of  $Q_{\times} (H-e) $ is illustrated in Figures~\ref{f.qdefs.a} and~\ref{f.qdefs.d}.
\begin{figure}
\centering
\subfigure[A copy of $H-e$.]{
\labellist \small\hair 2pt
\pinlabel {$a_f$}  at   63 104
\pinlabel {$b_f$}  at    62 18
\pinlabel {$c_f$}  at    134 18
\pinlabel {$d_f$}  at  134 104
\endlabellist
\includegraphics[scale=.80]{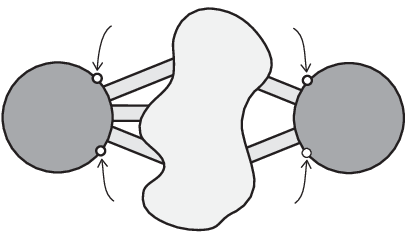}
\label{f.qdefs.a}
}
\hspace{10mm}
\subfigure[A graph state of $(H-e)_m$  that contributes to  $Q_{\times} (H-e) $ drawn on $H-e$.]{
\labellist \small\hair 2pt
\pinlabel {$a_f$}  at   63 104
\pinlabel {$b_f$}  at    62 18
\pinlabel {$c_f$}  at    134 18
\pinlabel {$d_f$}  at  134 104
\endlabellist
\includegraphics[scale=.80]{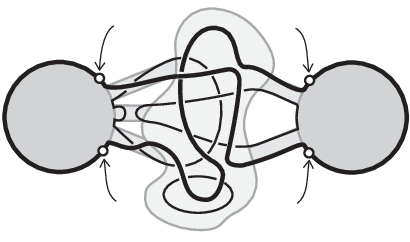}
\label{f.qdefs.d}
}

\vspace{5mm}

\subfigure[A graph state of $(G\otimes_{\varphi} H)_m$ drawn on $G\otimes_{\varphi} H$ locally at a copy of $H-e$.]{
\labellist \small\hair 2pt
\pinlabel {$u_f$} at  25 60
\pinlabel {$v_f$}  at   147 60
\pinlabel {$a_f$}  at    50 104
\pinlabel {$b_f$}  at    50 17
\pinlabel {$c_f$}  at    121 17
\pinlabel {$d_f$}  at  121 104
\pinlabel {$\overbrace{\quad\quad\quad\quad\quad\quad\quad\quad\quad\quad\quad}^{H-e}$}  at  85 128
\endlabellist
\includegraphics[scale=.80]{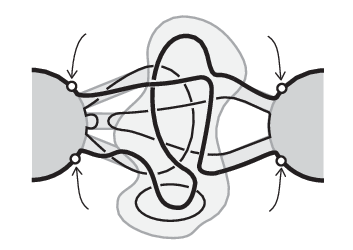}
\label{f.qdefs.e}
}
\hspace{10mm}
\subfigure[A graph state of $G_m$ drawn on $G$ locally at an edge $f$.]{
\labellist \small\hair 2pt
\pinlabel {$u_f$} at  25 60
\pinlabel {$v_f$}  at   147 60
\pinlabel {$a_f$}  at    50 104
\pinlabel {$b_f$}  at    50 17
\pinlabel {$c_f$}  at    121 17
\pinlabel {$d_f$}  at  121 104
\endlabellist
\includegraphics[scale=.80]{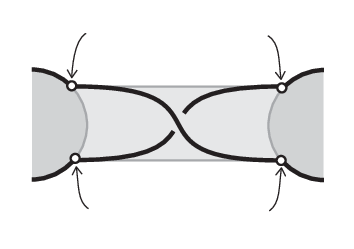}
\label{f.qdefs.f}
}
\caption{Illustrating the definition of $Q_{\times} (H-e) $, and the connection between states of $(H-e)_m$, $(G\otimes_{\varphi} H)_m$, and  $G_m$.}
\label{f.qdefs}
\end{figure}
We have
\begin{enumerate}
\item \label{i1} $  \sum_{\sigma \in \mathcal{ Y}_f(s)}  \omega(\sigma) t^{b(\sigma)}=   t^{-1}Q_{=} (H-e)$,  when $s$  pairs $a_f$  with $d_f$, and $b_f$ with $c_f$ (i.e., $s$ has a white smoothing at $f$ );
\item \label{i2} $  \sum_{\sigma \in \mathcal{ Y}_f(s)}  \omega(\sigma) t^{b(\sigma)}=   t^{-2}Q_{\shortparallel} (H-e)  $
    when $s$  pairs $a_f$  with $b_f$, and $c_f$ with $d_f$ (i.e., $s$ has a black smoothing at $f$ ); and
\item \label{i3} $  \sum_{\sigma \in \mathcal{ Y}_f(s)}  \omega(\sigma) t^{b(\sigma)}=  t^{-1}Q_{\times} (H-e)  $ when $s$,  pairs $a_f$  with $c_f$ and $b_f$ with $d_f$ (i.e., $s$ has a crossing at $f$ ).
\end{enumerate}
 Figure~\ref{f.qdefs} provides an illustration of Item \eqref{i3} above: consider the graph state of $(G\otimes_{\varphi} H)_m$ shown locally at a copy of $H-e$ in Figure~\ref{f.qdefs.e}.  Figure~\ref{f.qdefs.d} shows the corresponding graph state in $(H-e)_m$. Then the contribution  $\omega(\sigma) t^{b(\sigma)}$ made by the graph state of Figure~\ref{f.qdefs.e} is equal to  that made by the graph state in Figure~\ref{f.qdefs.d} except we have over counted the cycles (specifically the one containing the points $a_f, b_f, d_f, c_f$).   Item \eqref{i3} is readily seen to follow. The cycles of $(G\otimes_{\varphi} H)_m$ containing the labels are counted by the cycles in the vertex states of $G_m$ as in Figure~\ref{f.qdefs.f} which shows the vertex state of $G_m$ corresponding to Figure~\ref{f.qdefs.e}.

Returning to the proof, combining items \eqref{i1}--\eqref{i3} with Equations~\eqref{ep1} and~\eqref{ep2} gives
\begin{multline*}\label{splits}
 Q(G\otimes_{\varphi} H; (\alpha, \beta, \gamma), t  )  \\=
 \sum_{s\in \mathcal{S}( G)  }  { t^{c(s)}  (t^{-1}Q_{=} (H-e))^{\ws(s)}  (t^{-2}Q_{\shortparallel} (H-e))^{\bs(s)}  (t^{-1}Q_{\times} (H-e))  )^{\cs(s)}}   \\=   Q(G; (  t^{-1}Q_{=} (H-e),  t^{-2}Q_{\shortparallel} (H-e) , t^{-1}Q_{\times} (H-e)), t),
   \end{multline*}
where, in the middle equation, $\ws(s)$, $\bs(s)$, and $\cs(s)$ are respectively the numbers of white, black, and crossing vertex states in the graph state $s$.

It remains to determine $Q_{=} (H-e)$,  $Q_{\shortparallel} (H-e)$, and  $Q_{\times} (H-e)$.
To do this we define $Q_{=} (H/e)$,  $Q_{\shortparallel} (H/e)$, $Q_{\times} (H/e)$, $Q_{=} (H^{\tau(e)}/e)$, $Q_{\shortparallel} (H^{\tau(e)}/e)$, and $Q_{\times} (H^{\tau(e)}/e)$ in a similar way to the $H-e$ case above.
$Q_{=} (H/e)$,  $Q_{\shortparallel} (H/e)$, $Q_{\times} (H/e)$ are the terms of $Q(H/e)$ in which, respectively, $ad$ appear on a cycle, and $bc$ appear on a different cycle; $abcd$ appear on a cycle in that order; and $adbc$ appear on a cycle in that order.
Also, $Q_{=} (H^{\tau(e)}/e)$, $Q_{\shortparallel} (H^{\tau(e)}/e)$, and $Q_{\times} (H^{\tau(e)}/e)$ are the terms of $Q(H^{\tau(e)}/e)$ in which, respectively,  the points $adbc$ appear on a cycle in that order; $abdc$ appear on a cycle in that order; and  $ac$ appear on a cycle, and $bd$ appear on a different cycle.

We have
\begin{equation}\label{tpproof1}
\begin{split}
 Q(H-e; (\alpha, \beta, \gamma), t  ) &= Q_{=} (H-e)+Q_{\shortparallel} (H-e)+Q_{\times} (H-e), \\
  Q(H/e; (\alpha, \beta, \gamma), t  ) &= Q_{=} (H/e)+Q_{\shortparallel} (H/e)+Q_{\times} (H/e), \\
  Q(H^{\tau(e)}/e; (\alpha, \beta, \gamma), t  ) &= Q_{=} (H^{\tau(e)}/e)+Q_{\shortparallel} (H^{\tau(e)}/e)+Q_{\times} (H^{\tau(e)}/e).
\end{split}
\end{equation}
It is easily checked that
\begin{align*}
Q_{=} (H/e) =& tQ_{=} (H-e) , &
 Q_{\shortparallel} (H/e) =& t^{-1}Q_{\shortparallel} (H-e),&
  Q_{\times} (H/e) =& Q_{\times} (H-e), \\
  Q_{=} (H^{\tau(e)}/e) =& Q_{=} (H-e) , &
 Q_{\shortparallel} (H^{\tau(e)}/e) =& t^{-1}Q_{\shortparallel} (H-e),&
  Q_{\times} (H^{\tau(e)}/e) =& tQ_{\times} (H-e).
\end{align*}
Writing $t^{-1}Q_{=} (H-e)=\kappa$, $t^{-2}Q_{\shortparallel} (H-e)=\lambda$, and $t^{-1}Q_{\times} (H-e)=\mu$, then  substituting these into (\ref{tpproof1}) gives the system of equations in (\ref{t.tensor.e1}),  (\ref{t.tensor.e2}), (\ref{t.tensor.e3}) to solve for  $Q_{=} (H-e)$,  $Q_{\shortparallel} (H-e)$, and  $Q_{\times} (H-e)$. The theorem then follows since the determinant of the resulting system of equations has roots 0, 1 and -2.
\end{proof}

\begin{remark} Theorem \ref{t.tensor} gives unique solutions for $t \in \mathbb{C} \backslash \{0, 1, -2\} $, but what happens when $t\in \{0, 1, -2\} $?  If $t=0$, then $Q(G\otimes_{\varphi} H)=Q(H)=0$.  However, a factor of $t$ may be cancelled in all of equations (\ref{t.tensor.e1}),  (\ref{t.tensor.e2}), (\ref{t.tensor.e3}) since the right hand sides have no constant terms, and then again a unique solution results.
If $t=1$, then by Theorem \ref{t.qk}, it follows that $Q(G,(\alpha, \beta, \gamma), 1  )= c^{e(G)}$ for some constant $c$, and thus     $Q(G\otimes_{\varphi} H; (\alpha, \beta, \gamma), t  )$ is just $c^{e(G) (e(H)-1))}$.  The case when $t=-2$ is more interesting. For example, if $E(G)\neq \emptyset$ then it is easy to show (by induction on the number of edges) that $Q(G;(\alpha, \alpha,\alpha) ,-2)=0$, for all values of $\alpha$. Thus if $H$ has at least two edges, all possible values of $\kappa$, $\lambda$, and $\mu$ give a solution to the system of equations in Theorem~\ref{t.tensor}. It is then easy to find examples for which  $Q(G\otimes_{\varphi} H;(\alpha, \alpha,\alpha) ,-2)=0$ but   $Q(G; (\kappa, \lambda, \mu), -2  ) \neq 0$. Thus Theorem \ref{t.tensor} does not hold when $t=-2$. Finding an analogue of  Theorem \ref{t.tensor} for when $t=-2$ is an interesting open problem.  
\end{remark}

\begin{remark}
Although, as mentioned following Definition \ref{Qq}, we did not use the version of $Q$ from \cite{EMMa} with locally specified vertex weights here, Theorem \ref{t.tensor} readily adapts to this more general setting.  In fact, using the same techniques as in the proof of Theorem \ref{t.tensor}, it is possible to determine the necessary weights at each vertex of $G_m$ to accommodate the insertion of a different graph at each edge of $G$ (or even at just a single edge) instead of inserting the same graph $H$ at all of them.
\end{remark}

At each edge $f$ of $G$, up to equivalence there are four possible ways to identify an edge $f$ of $G$ and the distinguished edge $e$ of $H$.  These choices are determined by the identifying maps $\varphi$. This means  there are up to $4^{e(G)}$  different tensor products of $G$ and $H$. A consequence of Theorem~\ref{t.tensor} is that the transition polynomial, and hence its specialisations, will give the same value on all of these embedded graphs:
\begin{corollary}
The polynomial $Q(G\otimes_{\varphi} H; (\alpha, \beta, \gamma), t  )  $ depends only upon $G$, $H$, and the edge $e$ of $H$, and not the particular identification maps $\varphi$ used in the construction of the tensor product.
\end{corollary}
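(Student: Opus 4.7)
The plan is to obtain this as an immediate consequence of Theorem~\ref{t.tensor}. The theorem asserts the identity
\[
Q(G\otimes_{\varphi} H; (\alpha, \beta, \gamma), t) = Q(G; (\kappa, \lambda, \mu), t),
\]
so it suffices to observe that the right-hand side has no dependence on the choice of identifying maps $\varphi$. Indeed, $\kappa$, $\lambda$, and $\mu$ are the unique solutions of the linear system \eqref{t.tensor.e1}--\eqref{t.tensor.e3}, and the right-hand sides of that system are $Q(H-e;\,\cdot\,)$, $Q(H/e;\,\cdot\,)$, and $Q(H^{\tau(e)}/e;\,\cdot\,)$, each of which is determined by $H$ and the distinguished edge $e$ alone. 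Thus $\kappa$, $\lambda$, $\mu$ are functions of $(\alpha,\beta,\gamma)$, $t$, $H$, and $e$ only, and consequently so is $Q(G; (\kappa, \lambda, \mu), t)$.

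The main (and essentially only) point to verify is that the linear system \eqref{t.tensor.e1}--\eqref{t.tensor.e3} genuinely admits a unique solution for $\kappa,\lambda,\mu$, so that the expressions are unambiguous. Since Theorem~\ref{t.tensor} is already stated with the assertion of unique solvability, I would simply invoke that; in a self-contained write-up one would check that the coefficient matrix is invertible when $t$ is treated as a formal variable, which is routine.

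Given the above, for any two families of identifying maps $\varphi$ and $\varphi'$, Theorem~\ref{t.tensor} yields
\[
Q(G\otimes_{\varphi} H; (\alpha, \beta, \gamma), t) = Q(G; (\kappa, \lambda, \mu), t) = Q(G\otimes_{\varphi'} H; (\alpha, \beta, \gamma), t),
\]
which is precisely the claimed independence. I expect no substantive obstacle: the work has already been done in Theorem~\ref{t.tensor}, and the corollary is purely a matter of reading off what its right-hand side depends upon.
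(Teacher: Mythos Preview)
Your proposal is correct and is essentially identical to the paper's own proof: both simply observe that the right-hand side $Q(G;(\kappa,\lambda,\mu),t)$ of Theorem~\ref{t.tensor}, with $\kappa,\lambda,\mu$ determined by $H$ and $e$ alone, makes no reference to $\varphi$. The paper's version is a single sentence to this effect; your added remark on unique solvability of the linear system is fine but not needed, since Theorem~\ref{t.tensor} already asserts uniqueness.
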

\begin{proof}
In Theorem~\ref{t.tensor}, $Q(G)$, $\kappa$, $\lambda$ and $\mu$, and therefore $Q(G\otimes_{\varphi} H)$, do not use a particular choice of  $\varphi$ in their calculations.   Since this holds for all $t\in\mathbb{C}\backslash \{0,1,-2 \}$, it follows that the polynomials do not depend upon these choices. 
\end{proof}

Theorem~\ref{t.tensor} can also be used to recover a special case of the tensor product formula for the ribbon graph polynomial from \cite{HM}:
\begin{corollary}[\cite{HM}]\label{c.hm}
Let $G$ be an embedded graph, and $H$ be a connected orientable embedded  graph with a distinguished non-loop, non-bridge  edge $e$. Furthermore, let $\varphi$ be a family of identifying maps.
Then when $\kappa, \lambda \neq 0  $, $xy\neq 1$ and $\sqrt{xy}\notin \{0,1,-2\}$, 
\[ (\sqrt{y/x})^{r(G\otimes_{\varphi} H)}R(G\otimes_{\varphi} H; x+1, y, 1/\sqrt{xy})
= \kappa^{r(G)} \lambda^{n(G)} R\Big(G; \frac{\sqrt{xy}\,\lambda}{\kappa}+1,  \frac{\sqrt{xy}\,\kappa}{\lambda}, \frac{1}{ \sqrt{xy}}\Big)  \]
where
\[\kappa=  \frac{(\sqrt{y/x})^{r(H/e)}}{1-xy}  (R(H-e) - xR(H/e)),\quad \quad \lambda=\frac{(\sqrt{y/x})^{r(H-e)}}{1-xy}   (R(H/e) - xR(H-e)),\]
 and where $R(H/e)= R(H/e; x+1, y, 1/\sqrt{xy})$ and   $R(H-e)=R(H-e; x+1, y, 1/\sqrt{xy})$ in all of the expressions.
\end{corollary}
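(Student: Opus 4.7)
The plan is to derive Corollary~\ref{c.hm} as a direct specialisation of Theorem~\ref{t.tensor} at the transition polynomial parameters for which Proposition~\ref{p.qmbr} converts $Q$ into the ribbon graph specialisation $R(\,\cdot\,; x+1, y, 1/\sqrt{xy})$. Specifically, I would set $(\alpha, \beta, \gamma) = (\sqrt{y/x}, 1, 0)$ and $t = \sqrt{xy}$ throughout. Since $e$ is non-loop and non-bridge and $H$ is connected, we have $c(H-e) = c(H/e) = 1$, $v(H-e) = v(H)$, $v(H/e) = v(H)-1$, and $r(H-e) = r(H/e) = r(H)-1$. Using Proposition~\ref{p.qmbr} these identities rewrite the right-hand sides of equations~(\ref{t.tensor.e1}) and~(\ref{t.tensor.e2}) as explicit scalar multiples of $R(H-e; x+1, y, 1/\sqrt{xy})$ and $R(H/e; x+1, y, 1/\sqrt{xy})$, with the scalars involving only powers of $x$ and $\sqrt{y/x}$ dictated by the counts above.

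The key technical step is to show that when $H$ is orientable, the unique solution $(\kappa, \lambda, \mu)$ of the $3 \times 3$ system~(\ref{t.tensor.e1})--(\ref{t.tensor.e3}) satisfies $\mu = 0$ at this specialisation, so that the system effectively reduces to a $2 \times 2$ system in $\kappa$ and $\lambda$. A direct elimination in (\ref{t.tensor.e1})--(\ref{t.tensor.e3}) shows this is equivalent to the identity $(t+1)\,Q(H^{\tau(e)}/e) = Q(H-e) + Q(H/e)$ at $(\alpha, \beta, \gamma, t) = (\sqrt{y/x}, 1, 0, \sqrt{xy})$. I would attempt this by a state-sum argument comparing the medial graphs $(H-e)_m$, $(H/e)_m$, and $(H^{\tau(e)}/e)_m$, or by applying the Petrie duality identity~\eqref{e.Qdual2} at the edge $e$ together with $\gamma = 0$. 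This is where the orientability hypothesis is crucial, and it is the main obstacle in the proof.

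Once $\mu = 0$ is established, Cramer's rule applied to the remaining $2 \times 2$ system yields $\kappa$ and $\lambda$ explicitly. Substituting the $R$-expressions from the first step and simplifying using $t^2 - 1 = xy-1$ and $r(H-e) = r(H/e) = r(H) - 1$ matches the claimed formulas for $\kappa$ and $\lambda$. Finally, I would translate the resulting identity $Q(G \otimes_\varphi H; (\sqrt{y/x}, 1, 0), \sqrt{xy}) = Q(G; (\kappa, \lambda, 0), \sqrt{xy})$ back into $R$-polynomials by the scaling $Q(G; (\kappa, \lambda, 0), t) = \lambda^{e(G)}\,Q(G; (\kappa/\lambda, 1, 0), t)$ and a second application of Proposition~\ref{p.qmbr}, this time with $X = \sqrt{xy}\,\lambda/\kappa$ and $Y = \sqrt{xy}\,\kappa/\lambda$ so that $\sqrt{Y/X} = \kappa/\lambda$ and $\sqrt{XY} = \sqrt{xy}$. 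Careful bookkeeping using $e(G) = r(G) + n(G)$, $v(G) = r(G) + c(G)$, and $c(G \otimes_\varphi H) = c(G)$ (the latter following from connectedness of $H$) then collapses the prefactors into $\kappa^{r(G)} \lambda^{n(G)}$ on the right and $(\sqrt{y/x})^{r(G \otimes_\varphi H)}$ on the left, producing exactly the displayed identity.
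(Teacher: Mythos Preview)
Your overall plan is exactly the paper's: specialise Theorem~\ref{t.tensor} at $(\alpha,\beta,\gamma)=(\sqrt{y/x},1,0)$ and $t=\sqrt{xy}$, show $\mu=0$, solve the remaining $2\times 2$ system for $\kappa,\lambda$, and translate both sides back through Proposition~\ref{p.qmbr} using the rescaling $Q(G;(\kappa,\lambda,0),t)=\lambda^{e(G)}Q(G;(\kappa/\lambda,1,0),t)$.

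The one place where the paper is sharper is precisely the step you flag as the main obstacle. Instead of treating the system~(\ref{t.tensor.e1})--(\ref{t.tensor.e3}) as a black box and trying to verify the algebraic identity $(t+1)\,Q(H^{\tau(e)}/e)=Q(H-e)+Q(H/e)$, the paper reaches back into the proof of Theorem~\ref{t.tensor} and recalls that $\mu=t^{-1}Q_{\times}(H-e)$, where $Q_{\times}(H-e)$ collects the states of $(H-e)_m$ in which a single cycle visits the four marked boundary points in the crossed order $a_f,b_f,d_f,c_f$. When $H$ is orientable, two arcs of a state can only cross one another at a crossing vertex state; since $\gamma=0$ annihilates every such state, $Q_{\times}(H-e;(\sqrt{y/x},1,0),t)=0$ on the nose, and equation~(\ref{t.tensor.e3}) is never touched. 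This one-line observation replaces your proposed state-sum or partial-Petrie argument entirely and is where the orientability hypothesis actually enters.

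One small bookkeeping slip: since $e$ is a non-bridge, $r(H-e)=r(H)$, not $r(H)-1$; only $r(H/e)=r(H)-1$. The exponents $r(H/e)$ and $r(H-e)$ appearing in the stated formulas for $\kappa$ and $\lambda$ are genuinely different, and you will need that distinction when matching the prefactors coming out of Proposition~\ref{p.qmbr}.
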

\begin{proof}
 Theorem~\ref{t.tensor} gives
\[Q(G\otimes_{\varphi} H; (b,1,0),t  )  =  Q(G; (\kappa, \lambda, \mu), t  ) = \lambda^{e(G)} Q(G; (\kappa/\lambda, 1, 0), t  ),\]
where $\kappa$, $\lambda$, and $\mu$ are the solutions to Equations~\eqref{t.tensor.e1}-\eqref{t.tensor.e3} with $ (\alpha, \beta, \gamma)=(b,1,0)$. However, as $H$ is orientable, the only way for one cycle to cross another is for there to be a crossing transition at some vertex.  Since crossing transitions have weight zero, it follows that $\mu=Q_{\times}(H-e; (b,1,0), t) =0$, and so $\kappa$ and $\lambda$ can be determined using only \eqref{t.tensor.e1} and \eqref{t.tensor.e2}, giving   $\kappa$ and $\lambda$ in terms of $Q(H-e)$ and $Q(H/e)$.
 The result then follows by using Equation~\eqref{e.qmbr0} to express the transition polynomial expressions in terms of $R(G)$.
\end{proof}

Initially, one may expect to be able to adapt the proof of Corollary~\ref{c.hm} to obtain a tensor product formula for the Penrose polynomial, but a little thought shows that this cannot be done. The key fact that enabled us to obtain Corollary~\ref{c.hm} from Theorem~\ref{t.tensor} was that setting $\gamma=0$ in $Q(G\otimes_{\varphi} H; (\alpha, \beta, \gamma), t  )$,  gave that $\mu=0$ in  $Q(G; (\kappa, \lambda, \mu), t  )$. However, recalling that the weight system for the Penrose polynomial is $(1,0,-1)$, if we set $\beta=0$ in  $Q(G\otimes_{\varphi} H; (\alpha, \beta, \gamma), t  )$, then it does not necessarily follow that $\lambda=0$ in $Q(G; (\kappa, \lambda, \mu), t  )$. (For example, take $H$ to be the plane $\theta$-graph.) Rather than being a hindrance, this is  fortunate in that  it provides a family of identities that relate the Penrose and ribbon graph polynomials. To see this,  Theorem~\ref{t.tensor} and Equation~\eqref{e.qmbr2} give
\begin{equation}\label{e.tp2}
P(G\otimes_{\varphi} H; \lambda  )   =  Q(G; (\kappa, \lambda, \mu), \lambda  ),
\end{equation}
where
\begin{equation}\label{e.tp3}
\begin{split}
t\kappa+t^2\lambda+t\mu&= P( H-e;  \lambda  ) \\
t^2\kappa+t\lambda+t\mu&= P( H/e;   \lambda  ) \\
t\kappa+t\lambda+t^2\mu&= P( H^{\tau(e)}/e;   \lambda  ).
\end{split}
\end{equation}
If it happens that the solution to \eqref{e.tp3} gives $\mu=0$ (which does indeed occur for particular choices of $H$, such as the plane $\theta$-graph ) then \eqref{e.tp2} gives that
\begin{equation}\label{e.tp4}
P(G\otimes_{\varphi} H; t  )
  =   t^{c(G)} \kappa^{r(G)}\lambda^{n(G)}  R(G; t \lambda/\kappa+1,  t \kappa/\lambda, 1/ t),
\end{equation}
again connecting the ribbon graph and Penrose polynomials. Thus, Equation~\eqref{e.dbl2} arises from \eqref{e.tp4} by taking $H$ to be the plane $\theta$-graph.  This family of relations between polynomials can then be used to translate between interpretations of, and results about, the two polynomials. It would be interesting to have a characterisation of the family of 1-variable specialisations of $R(G)$ that appear on the right-hand side of Equation~\eqref{e.tp4} and hence can be written as the Penrose polynomial of some graph.

\medskip

We conclude with a discussion of the relationship between the work presented here and a result of Jaeger.   In \cite{Ja88},  Jaeger found the following connection between the Tutte and transition polynomials, which we have translated to our notation (the polynomials here and in \cite{Ja88} differ by a prefactor and the roles of $\alpha$ and $\beta$ are reversed).
\begin{theorem}[Jaeger \cite{Ja88}]\label{t.ja}
Let $G$ be a connected plane graph, and let $t\in \mathbb{C}\backslash \{0,1,-2\}$ and $\lambda, \mu \in \mathbb{C}\backslash \{0\}$. Then
\[    \tau^{-1} \lambda^{v(G)-1}   \mu^{f(G)-1}   Q(G; (\beta, \alpha, \gamma), t  ) = T(G; (\lambda t)/\mu +1, (\mu t)/\lambda+1)) ,  \]
where $\alpha$, $\beta$, and  $\gamma$ satisfy
\begin{align*}
 t \alpha+ \beta+ \gamma&= t/\mu  +1/\lambda\\
  \alpha+t\beta+ \gamma&= 1/\mu+t/\lambda \\
  \alpha + \beta+t \gamma&= 1/\mu+1/\lambda.
\end{align*}
\end{theorem}

Jaeger's theorem can be obtained as a special case of Theorem~\ref{t.tensor} as follows.
\begin{equation}\label{jder}
        \begin{aligned}
R(G; (\lambda t)/\mu+1, (\mu t)/\lambda , 1/t) &=    (\mu/(\lambda t))^{c(G)}    (\lambda/ \mu)^{v(G)}  \mu^{e(G)}  Q\left(G ;(\mu/\lambda , 1 , 0), t \right)    ) \\
&=    (\mu/(\lambda t))^{c(G)}    (\lambda/ \mu)^{v(G)}  \mu^{e(G)}  Q\left(G ;(1/\lambda , 1/ \mu, 0), \tau \right)) \\
&=    (\mu/(\lambda t))^{c(G)}    (\lambda/ \mu)^{v(G)}  \mu^{e(G)}  Q\left(G ;(\beta, \alpha, \gamma), \tau \right)),
        \end{aligned}
        \end{equation}
where $\alpha$, $\beta$, and  $\gamma$ are the solutions the system of equations given in Theorem~\ref{t.ja}.
In \eqref{jder}, the first equality is by Equation~\eqref{e.qmbr0}, and the third equality follows by an application of Theorem~\ref{t.tensor} where
$H=C_2$ and the tensor product is chosen so that  $G\otimes_{\varphi} C_2 =G$.
Jaeger's Theorem~\ref{t.ja} can then be recovered by restricting to connected plane graphs (so $R(G; (\lambda t)/\mu+1, (\mu t)/\lambda , 1/t)  =  T(G; (\lambda t)/\mu +1, (\mu t)/\lambda+1)$), and by using Euler's formula to rewrite the prefactor.

It is interesting to note that Jaeger  proves his result by, essentially, proving and applying the tensor product for the special case $G\otimes_{\varphi} C_2$ as above. However, the application of Theorem~\ref{t.tensor} is unnecessary  since $(\beta, \alpha, \gamma) = (1/\lambda , 1/\mu, 0)$ is the solution to the system of equations in Theorem~\ref{jder}. Thus the first two lines of  \eqref{jder} give rise to a much simpler proof of Theorem~\ref{t.ja} than the one in \cite{Ja88}. In particular, we see that Theorem~\ref{t.ja} is equivalent to Equation~\eqref{e.qmbr0}, and so it provides no additional information than the simpler formula.

\section*{Acknowledgement}
We thank the anonymous referee for an number of insightful comments and helpful suggestions.

\label{lastpage}


\begin{thebibliography}{99}
\bibitem{Ai97} Aigner, M. (1997) The Penrose polynomial of a plane graph, {\it Math. Ann.} {\bf 307} 173--189.
\bibitem{Aig00} Aigner, M. (2000) Die Ideen von Penrose zum $4$-Farbenproblem, {\it Jahresber. Deutsch. Math.-Verein.} {\bf 102} 43--68.
\bibitem{BR1} Bollob\'{a}s, B. and Riordan, O.  (2001) A polynomial invariant of graphs  on orientable surfaces, {\it Proc. London Math. Soc}. {\bf 83} 513--531.
\bibitem{BR2} Bollob\'{a}s, B. and Riordan, O. (2002)  A polynomial of graphs on surfaces, {\it Math. Ann.} {\bf 323} 81--96.
\bibitem{Bry82} Brylawski, T., The Tutte polynomial. I. General theory. {\it Matroid theory and its applications, Liguori, Naples}, 1982
\bibitem{Ch1} Chmutov, S. (2009)  Generalized duality for graphs on surfaces and the signed Bollob\'as-Riordan polynomial, {\it J. Combin. Theory Ser. B} {\bf 99} 617--638.
 \bibitem{CP} Chmutov S. and Pak, I. (2007) The Kauffman bracket of virtual links and the Bollob\'as-Riordan polynomial, {\it Mosc. Math. J.} {\bf 7} 409--418.
\bibitem{Da} Dasbach, O.T., Futer, D., Kalfagianni, E., Lin, X.-S. and Stoltzfus, N.W. (2008) The Jones polynomial and graphs on surfaces, {\it J. Combin. Theory Ser. B} {\bf 98} 384--399.
\bibitem{EM98} Ellis-Monaghan, J.A. (1998) New results for the Martin polynomial, {\it J. Combin. Theory Ser. B} {\bf 74} 326--352.
 \bibitem{EMMa} Ellis-Monaghan, J.A. and Moffatt, I. Twisted duality for embedded graphs, {\it Trans.  Amer. Math. Soc.} {\bf 364} 1529--1569.
  \bibitem{EMMb} Ellis-Monaghan, J.A. and Moffatt, I. (2013) A Penrose polynomial for embedded graphs, {\it European J. Combin.} {\bf 34} 424--445.
 \bibitem{E-MS02}  Ellis-Monaghan, J.A. and Sarmiento, I. (2002) Generalized transition polynomials, {\it Congr. Numer.} {\bf 155} 57--69.
 \bibitem{ES} Ellis-Monaghan, J.A. and Sarmiento, I. (2011) A recipe theorem for the topological Tutte polynomial of Bollob\'as and Riordan, {\it  European J. Combin.} {\bf 32} 782--794.
\bibitem{HM} Huggett, S. and Moffatt, I. (2011) Expansions for the Bollob\'as-Riordan and Tutte polynomials of separable ribbon graphs, {\it Ann. Comb.} {\bf 15} 675--706.
 \bibitem{Ja88}  Jaeger, F. (1988) On Tutte polynomials and cycles of plane graphs, {\it J. Combin. Theory Ser. B } {\bf 44} 127--146.
\bibitem{Ja90} Jaeger, F. (1990) On transition polynomials of $4$-regular graphs. {\it Cycles and rays (Montreal, PQ, 1987)}, 123--150, {\it NATO Adv. Sci. Inst. Ser. C Math. Phys. Sci}. {\bf 301} {\it Kluwer Acad. Publ., Dordrecht}.
\bibitem{KP03} Korn, M. and Pak, I. (2003) Combinatorial evaluations of the Tutte polynomial, preprint.
\bibitem{Las83} Las Vergnas, M. (1983) Le polyn\^ome de Martin d'un Graphe  Eulerien, {\it Ann. Discrete Math.} {\bf 17} 397--411.
\bibitem{Mo1} Moffatt, I. (2008) Knot invariants and the Bollob\'as-Riordan polynomial of embedded graphs, {\it European J. Combin.} {\bf 29} 95--107.
\bibitem{Mof11c} Moffatt, I. (2013) Separability and the genus of a partial dual, {\it European  J. Combin.} {\bf 34} 355--378.
\bibitem{OW92} Oxley, J.G. and Welsh, D.J.A. (1992) Tutte polynomials computable in polynomial time, {\it Discrete Math}. {\bf 109} 185--192.
\bibitem{Pen71} Penrose, R. (1971) Applications of negative dimensional tensors. {\it Combinatorial Mathematics and its Applications (Proc. Conf., Oxford, 1969)} pp. 221--244 {\it Academic Press, London.}
\bibitem{Tra05} Traldi, L. (2005) Parallel connections and coloured Tutte polynomials, {\it Discrete Math.} {\bf 290} 291--299.
291--299.
\bibitem{Wil79} Wilson, S.E. (1979) Operators over regular maps, {\it Pacific J. Math.} {\bf 81} 559--568.
\bibitem{Woo02} Woodall, D.R. (2002) Tutte polynomial expansions for 2-separable graphs, {\it Discrete Math.} {\bf 247}  201--213.

\end{thebibliography}
\end{document}